\documentclass[11pt, oneside]{amsart}

\usepackage{etex}
\usepackage[T1]{fontenc}
\usepackage[utf8]{inputenc}
\usepackage[english]{babel}
\usepackage{geometry}

\usepackage{amsmath}
\usepackage{amsfonts}
\usepackage{amssymb}
\usepackage{amsthm}
\usepackage{booktabs}
\usepackage{paralist}
\usepackage{subfig}
\usepackage{array}
\usepackage{xy}
\usepackage{fancyhdr}
\usepackage{wrapfig}
\usepackage{bm}
\usepackage{listings}
\usepackage{enumerate}
\usepackage{multirow}
\usepackage{placeins}
\usepackage{pgfplots}
\usepackage{pgfplotstable}
\pgfplotsset{compat=1.16,width=.7\textwidth}
\usepackage{tikz}
\usepackage{xcolor}
\usepackage{mathtools}
\usepackage{algorithm}
\usepackage{algpseudocode}
\usepackage{dsfont}
\usepackage{stmaryrd}
\usepackage{cite}		
\usepackage{xparse}
\usepackage{siunitx}  

\usepackage{hyperref}

\usepackage{cleveref}

\theoremstyle{definition}
\newtheorem{theorem}{Theorem}[section]

\newtheorem{proposition}[theorem]{Proposition}
\newtheorem{corollary}[theorem]{Corollary}
\newtheorem{definition}[theorem]{Definition}

\newtheorem{remark}[theorem]{Remark}


\crefname{equation}{}{}
\crefname{theorem}{Theorem}{Theorems}
\crefname{lemma}{Lemma}{Lemmas}
\crefname{corollary}{Corollary}{Corollaries}
\crefname{proposition}{Proposition}{Propositions}
\crefname{remark}{Remark}{Remarks}
\crefname{section}{Section}{Sections}
\crefname{figure}{Figure}{Figures}
\crefname{algorithm}{Algorithm}{Algorithms}
\crefname{table}{Table}{Tables}
\crefname{subsection}{Section}{Sections}

\DeclarePairedDelimiter{\norm}{\lVert}{\rVert}


\DeclareMathOperator{\vspan}{span} 	


\newcommand{\C}{\mathbb{C}}
\newcommand{\R}{\mathbb{R}}

\newcommand{\N}{\mathbb{N}}
\newcommand{\nostro}{RKcompress}


\renewcommand{\vec}[1]{\boldsymbol{#1}}		

\newcommand{\mat}[1]{\mathbf{#1}}		

\newcommand{\kryl}{\mathcal{K}}		
\newcommand{\rat}{\mathcal{Q}}		
\newcommand{\poly}{\Pi}				


\makeatletter
\NewDocumentCommand{\LineComment}{s m}{%
  \Statex \IfBooleanF{#1}{\hspace*{\ALG@thistlm}}\(\triangleright\) #2}
\makeatother

\lstset{
	language = Octave,
	identifierstyle=\color{black},
	basicstyle={\small\ttfamily},
	tabsize=4,
	showstringspaces=false,
	stringstyle=\bfseries\color{red!60!black},
	commentstyle=\color{green!60!black},
	keywordstyle=\bfseries\color{blue!60!black}
   }

\sisetup{
    table-number-alignment=center, 
    table-format=4.0 
}

\graphicspath{{./}}

\makeatletter
\newcommand{\algmargin}{\the\ALG@thistlm}
\makeatother
\algnewcommand{\parState}[1]{\State%
	\parbox[t]{\dimexpr\linewidth-\algmargin}{\strut #1\strut}}

\makeatletter
\renewcommand*\env@matrix[1][*\c@MaxMatrixCols c]{%
  \hskip -\arraycolsep
  \let\@ifnextchar\new@ifnextchar
  \array{#1}}
\makeatother

\numberwithin{equation}{section}

\title[Low-memory Lanczos with rational Krylov compression]{A low-memory Lanczos method with rational Krylov compression for matrix functions}

\author{Angelo A. Casulli}
\address{Scuola Normale Superiore, Piazza dei Cavalieri 7, 56126 Pisa, Italy}
\email{angelo.casulli@sns.it}

\author{Igor Simunec}
\address{Scuola Normale Superiore, Piazza dei Cavalieri 7, 56126 Pisa, Italy}
\email{igor.simunec@sns.it}

\begin{document}

\maketitle

\begin{abstract}
	In this work we introduce a memory-efficient method for computing the action of a Hermitian matrix function on a vector. Our method consists of a rational Lanczos algorithm combined with a basis compression procedure based on rational Krylov subspaces that only involve small matrices. The cost of the compression procedure is negligible with respect to the cost of the Lanczos algorithm. This enables us to avoid storing the whole Krylov basis, leading to substantial reductions in memory requirements. 
	This method is particularly effective when the rational Lanczos algorithm needs a significant number of iterations to converge and each iteration involves a low computational effort. This scenario often occurs when polynomial Lanczos, as well as extended and shift-and-invert Lanczos are employed.
	Theoretical results prove that, for a wide variety of functions, the proposed algorithm differs from rational Lanczos by an error term that is usually negligible. 
	The algorithm is compared with other low-memory Krylov methods from the literature on a variety of test problems, showing competitive performance.
\end{abstract}

\section{Introduction}

A fundamental problem in numerical linear algebra is the approximation of the action of a matrix function $f(A)$ on a vector $\vec b$, where $A \in \C^{n \times n}$ is a matrix that is typically large and sparse, $\vec b \in \C^n$ is a vector and $f$ is a function defined on the spectrum of $A$. In this work, we focus on the case of a Hermitian matrix $A$. We recall that when $A$ is Hermitian, given an eigendecomposition $A = U D U^H$, the matrix function $f(A)$ is defined as $f(A) = U f(D) U^H$, where~$f(D)$ is a diagonal matrix obtained by appliying $f$ to each diagonal entry of $D$. We refer to \cite{Higham08functions} for an extensive discussion of matrix functions.

Popular methods for the approximation of $f(A) \vec b$ are polynomial~\cite{saad92analysis,hochbruck97krylov,frommer08matrix,frommer14efficient,guettel21comparison} and rational Krylov methods~\cite{druskin98extended,moret04rd,guettel13rational,aceto19rational,benzi22rational}.
The former only access $A$ via matrix-vector products, while the latter require the solution of shifted linear systems with $A$. 
When the linear systems can be solved efficiently, rational Krylov methods can be more effective than polynomial Krylov methods since they usually require much fewer iterations to converge. 
However, there are several situations in which rational Krylov methods are not applicable, either because the matrix~$A$ is only available implicitly via a function that computes matrix-vector products, or because $A$ is very large and the solution of linear systems is prohibitively expensive. 

When $A$ is Hermitian, the core component of a polynomial Krylov method is the Lanczos algorithm~\cite{lanczos50iteration}, which constructs an orthonormal basis $\mat Q_m = [\vec q_1 \: \dots \: \vec q_m]$ of the polynomial Krylov subspace $\kryl_m(A, \vec b) = \vspan\{ \vec b, A \vec b, \dots, A^{m-1} \vec b \}$ by exploiting a short-term recurrence. The product~$f(A) \vec b$ can then be approximated by the Lanczos approximation
\begin{equation*}
	\vec f_m := \mat Q_m f(\mat T_m) \vec e_1 \norm{\vec b}_2, \qquad \mat T_m :=  \mat Q_m^H A \mat Q_m,
\end{equation*} 
where $\vec e_1$ denotes the first unit vector. 
The Lanczos algorithm uses a short-term recurrence in the orthogonalization step, so each new basis vector is orthogonalized only against the last two basis vectors, and only three vectors need to be kept in memory to compute the basis $\mat Q_m$. Even though the basis constructed by the Lanczos algorithm is orthonormal in exact arithmetic, the one computed in finite precision arithmetic will exhibit a loss of orthogonality, leading to a delay in the convergence of the Lanczos approximation of $f(A) \vec b$. For more information on the finite precision behavior of the Lanczos algorithm, we direct the reader to \cite[Section~10.3]{golub2013matrix} and the references therein.
Although the basis $\mat Q_m$ and the projected matrix $\mat T_m$ can be computed by using the short-term recurrence that only requires storage of the last three basis vectors, forming the approximate solution $\vec f_m$ still requires the full basis $\mat Q_m$. 
When the matrix $A$ is very large, there may be a limit on the maximum number of basis vectors that can be stored, so with a straightforward implementation of the Lanczos method there is a limit on the number of iterations of Lanczos that can be performed and hence on the attainable accuracy. 
In the literature, several strategies have been proposed to deal with low memory issues. See the recent surveys~\cite{guettel20limited,guettel21comparison} for a comparison of several low-memory methods.

A simple but effective approach is the two-pass Lanczos method~\cite{borici00fast,frommer08matrix}. With this approach, the Lanczos method is first run once to determine the projected matrix $\mat T_m$ and compute the short vector $\vec t_m = f(\mat T_m)\vec e_1 \norm{\vec b}_2$. 
After $\vec t_m$ has been computed, the Lanczos method is run for a second time to form the product $\vec f_m = \mat Q_m \vec t_m$ as the columns of $\mat Q_m$ are computed. 
This method doubles the number of matrix-vector products with $A$ with respect to standard Lanczos, but it requires storage of only the last three basis vectors.

Another possibility is the multi-shift conjugate gradient method~\cite{vandeneshof02numerical,frommer99fast}. This method is based on an explicit approximation of $f$ with a rational function $r$ expressed in partial fraction form. 
Then, $f(A) \vec b \approx r(A) \vec b$ is approximated by using the conjugate gradient method to solve each of the linear systems that appear in the partial fraction representation of~$r(A) \vec b$. 
This can be done efficiently by exploiting the shift invariance of Krylov subspaces, i.e., the fact that $\kryl_m(A, \vec b) = \kryl_m(A + \theta I, \vec b)$ for any $\theta \in \R$, in order to use a single Krylov subspace to obtain approximate solutions to all the linear systems symultaneously. Compared to Lanczos, this method requires performing additional vector operations and storing vectors proportionally to the number of poles of the rational approximant~$r$.

When $f$ is a Stieltjes function, it is possible to use a restarting strategy that is similar to restarted Krylov subspace methods for linear systems~\cite{eiermann06restarted,frommer14efficient,frommer14convergence}. By exploiting the Stieltjes integral representation of the function $f$, the error after a certain number of Lanczos iterations can be written as $f(A) \vec b - \vec f_m = f_m(A) \vec q_{m+1}$, where $f_m$ is still a Stieltjes function that depends on $f$ and $\mat T_m$. This property makes it possible to restart the Lanczos method after a certain number of iterations, and then iteratively approximate the error using the same method.

Recently, a low-memory method has been proposed to compute an approximation from a Krylov subspace to $f(A) \vec b$ when $f$ is a rational function~\cite{chen23lowmemory}. This approximation is optimal in a norm that depends on the denominator of the rational function. If $d$ is the degree of the denominator of $f$, the approximation from $\kryl_k(A, \vec b)$ can be computed with $k + d$ matrix-vector products with $A$, while storing approximately $2d$ vectors. This method can be extended to non-rational functions $f$ by means of rational approximations, and it often produces approximations that are comparable or better than the Lanczos approximation. 

In this work we propose a new low-memory algorithm for the approximation of $f(A) \vec b$. 
Our method combines an outer rational Lanczos iteration with an inner rational Krylov subspace, which is used to compress the outer Krylov basis whenever it reaches a certain size. 
Similarly to \cite{casulli2024computing}, the inner rational Krylov subspace does not involve the matrix $A$, but only small matrices. 
This is a key observation, since constructing a basis of the inner subspace does not require the solution of linear systems with $A$, and hence it is cheap compared to the cost of the outer Lanczos iteration.
The approximate solutions computed by our algorithm coincide with the ones constructed by the outer Krylov subspace method when $f$ is a rational function, and for a general function they differ by a quantity that depends on the best rational approximant of $f$ with the poles used in the inner rational Krylov subspace. 
In order to obtain a meaningful advantage when compressing the basis, the algorithm that we propose should be used when the outer Krylov method requires several iterations to converge, so polynomial Lanczos is an ideal candidate. 
Other potential options are rational Lanczos methods with repeated poles, such as the extended~\cite{druskin98extended} or shift-and-invert Krylov methods~\cite{moret04rd}.

If the outer Krylov basis is compressed every $m$ iterations and the inner rational Krylov subspace has $k$ poles, our approach requires the storage of approximately $m + k$ vectors. Additionally, due to the basis compression, our approximation involves the computation of matrix functions of size at most $(m+k) \times (m+k)$, so the cost does not grow with the number of iterations. This represents an important advantage with respect to the Lanczos method, since when the number of iterations is very large the evaluation of $f$ on the projected matrix can become quite expensive.

The remainder of the paper is organized as follows. In \cref{subsec:notation} we introduce some general notation that we use throughout the manuscript. Some relevant background regarding Krylov subspaces and low-rank updates of matrix functions is reported in \cref{sec:background}. In \cref{sec:algorithm} we describe the memory-efficient algorithm for the computation of $f(A) \vec b$, and in \cref{sec:algorithm-analysis-and-comparison} we analyze it and compare it with other low-memory methods from the literature. 
The algorithm is tested on several problems from selected applications in \cref{sec:numerical-experiments}, showing competitive performance with respect to existing low-memory methods. \cref{sec:conclusions} contains some concluding remarks.

\subsection{Notation}
\label{subsec:notation}

We use bold lowercase letters to denote vectors and bold uppercase letters to denote certain matrices associated with Krylov subspaces. We denote by $I$ the identity matrix and by $\vec e_i$ the $i$-th unit basis vector, with size that can be inferred from the context when not specified. The field of values (or numerical range) of a matrix $A$ is denoted by $\mathbb{W}(A) := \{\vec x^H A \vec x \,:\, \norm{\vec x}_2 = 1 \}$; it coincides with the convex hull of the spectrum of $A$ when $A$ is Hermitian. We denote by $\overline{\C} = \C \cup \{\infty\}$ the extended complex plane. The conjugate transpose of a matrix $A \in \C^{n \times n}$ is denoted by $A^H$. 
We occasionally use the notation~$\vec \xi_m = [\xi_0, \dots, \xi_{m-1}]$ to denote a list of complex numbers; in such a case, we write $\vec \xi_m \subset \overline{\C}$ and $\xi_j \in \vec \xi_m$ to indicate an element of the list. We denote by $\poly_m$ the set of polynomials of degree up to $m$.  
Given a compact set $\mathcal{S} \subset \C$ and a continuous function $f : \mathcal{S} \to \C$, we denote by $\norm{f}_{\mathcal{S}}$ the supremum norm of the function $f$ on $\mathcal{S}$.     

\section{Background}
\label{sec:background}

In this section we recall some results on Krylov subspaces and on low-rank updates of matrix functions that we are going to employ in the derivation of our algorithm in \cref{sec:algorithm}.

\subsection{Krylov subspaces}
\label{subsec:krylov-background}

Rational Krylov subspaces are a very useful tool for the computation of the action of a matrix function of a vector. A broad description of the topic can be found in \cite{guettel2010rational,guettel13rational}. 
\begin{definition}
	Let $A\in \C^{n\times n}$, $\vec b \in \C^{n}$ and let $\vec{\xi}_k$ be a list of poles $[\xi_0,\dots,\xi_{k-1}] \subset \overline{\C}$ which are not eigenvalues of $A$. The associated rational Krylov subspace is defined as
	\begin{equation*}
		\rat(A,\vec b,\vec \xi_k):=\Big\{q(A)^{-1} p(A) \vec b ,\quad  \text { with } p \in \poly_{k-1}\Big\},
	\end{equation*} 
	where 
	\begin{equation*}
		q(x):=\prod_{\xi\in \vec \xi_k \,,\, \xi \neq \infty}(x-\xi).
	\end{equation*}
\end{definition}

An orthonormal basis $\mat Q_k\in \C^{n\times k}$ of $\rat(A,\vec b,\vec \xi_k)$ can be computed with the rational Arnoldi algorithm~\cite[Algorithm~1]{guettel2010rational} introduced by Ruhe in \cite{ruhe1984rational}, that iteratively computes the columns of~$\mat Q_k$ and two matrices $\underline{\mat K}_{k-1}, \underline{\mat H}_{k-1}\in\C^{k \times (k-1)}$ in upper Hessenberg form such that 
\begin{equation}
	\label{eqn:rad0}
	A \mat Q_{k}\underline{\mat K}_{k-1} =\mat Q_{k}\underline{\mat H}_{k-1}.
\end{equation}
In particular, if $\mat Q_k$ is computed by the rational Arnoldi algorithm, for each $i \le k$ its first $i$ columns span the subspace $\rat(A,\vec b, [\xi_0,\dots,\xi_{i-1}])$. We are going to denote by $\mat K_{k-1}$ and $\mat H_{k-1}$ the $(k-1) \times (k-1)$ leading principal blocks of $\underline{\mat K}_{k-1}$ and $\underline{\mat H}_{k-1}$, respectively.        

The projection of $A$ onto $\rat(A, \vec b, \vec \xi_k)$ is given by $\mat T_k = \mat Q_k^H A \mat Q_k$. After computing $\mat Q_k$ and $\mat T_k$, the action of the matrix function $f(A)$ on the vector $\vec b$ can be approximated as
\begin{equation}
	\label{eqn:krylov-matfun-approx}
	f(A) \vec b \approx \mat Q_k f(\mat T_k) \vec e_1 \norm{\vec b}_2. 
\end{equation}
When all the poles in $\vec \xi_k$ are equal to $\infty$, the associated space reduces to a polynomial Krylov subspace, which we denote by $\kryl_m(A, \vec b)$, and the approximation~\cref{eqn:krylov-matfun-approx} reduces to the Lanczos approximation.   

Each iteration of the rational Arnoldi algorithm consists of the solution of a linear system\footnote{The linear system reduces to a matrix-vector product if the considered pole is infinity.} involving the matrix $A-\xi I$ for a pole $\xi\in\vec \xi_{k}$, and an orthogonalization step in which the new vector is orthogonalized with respect to all the columns of $\mat Q_k$ already computed. 
Usually, the solution of the linear systems is a very expensive operation, especially for large~$A$; to overcome this issue, a commonly employed strategy is to choose $\vec \xi_k$ by cycling a small number of poles and  keep in memory a decomposition of the matrix~$A-\xi I$ for each distinct $\xi \in \vec \xi_k$, which allows us to obtain fast solutions of multiple linear systems with the same pole. 

The orthogonalization step can significantly slow down the procedure when a Krylov subspace with a large dimension is being constructed. For a Hermitian matrix~$A$, we can employ the rational Lanczos algorithm based on short recurrences (see \cite[Section~5.2]{guettel2010rational}) and construct the Krylov subspace by using only two vectors in the orthogonalization step as described\footnote{For simplicity, \cref{algorithm:short-recurrence} does not allow poles equal to zero. A way to overcome this issue is given in Remark~\ref{rmk:zero-pole}.} in Algorithm~\ref{algorithm:short-recurrence}. If the latter procedure is employed, the matrices $\underline{\mat K}_{k-1}$ and $\underline{\mat H}_{k-1}$ that satisfy \eqref{eqn:rad0} are of the form 
\begin{equation}\label{eqn:defHK-tridiag}
	\underline{\mat H}_{k-1}=\begin{bmatrix}
		\alpha_1 & {\beta}_{1}\\
		{\beta}_{1}& \ddots & \ddots\\
		&\ddots&\ddots & {\beta}_{k-2}\\
		&& \beta_{k-2}&\alpha_{k-1}\\
		&& &\beta_{k-1}
	\end{bmatrix}\quad \mathrm{and}\quad \underline{\mat K}_{k-1}= \begin{bmatrix}1 \\ &\ddots\\ &&1\\ 0 &\dots& 0\end{bmatrix} + \begin{bmatrix}
		\xi_0^{-1}\\ &\ddots\\ && \xi_{k-1}^{-1}
	\end{bmatrix}\underline{\mat H}_{k-1},
\end{equation} 
where $\alpha_i$ and $\beta_i$ are computed by Algorithm~\ref{algorithm:short-recurrence} for each $i$, and with the convention that the inverse of an infinite pole equals to zero. 
Notice that in each iteration of the rational Lanczos algorithm we need to solve a linear system in which the right-hand side has two columns instead of one (see~\cref{algorithm:iter-short-rec-arnoldi}). 
If the linear systems are solved with a direct method, this does not represent a significant increase in computational cost with respect to a linear system with a single right-hand side, since the matrix factorization has to be computed only once.
Moreover, it has been observed that short recurrences can produce a numerical loss of orthogonality in the columns of $\mat Q_k$ in finite precision arithmetic; see \cite[Section~10.3]{golub2013matrix} for more details.

\begin{figure}[tp]
	\vspace*{-\baselineskip}
	\begin{minipage}{\columnwidth}
		\begin{algorithm}[H]
			\begin{algorithmic}
				\Require{$A \in \C^{n \times n}, \vec b \in \C^{n},\boldsymbol {\xi}_{k+1}= [\xi_0, \dots, \xi_{k}] \subset \overline{\C}$}
				\Ensure{ $\mat Q_{k+1}\in \C^{n\times (k+1)},$ $\alpha_1,\dots,\alpha_k$, $\beta_1,\dots \beta_k  \in \C$}
				\State $\xi_{-1}\gets \infty, \quad \beta_{0}=0, \quad  \vec q_{0}=\vec 0$
				\State $\vec w \gets (I-A/\xi_0)^{-1}\vec b$ \Comment{with the convention $A/\infty=0$}
				\State $\vec q_1\gets \vec w/\norm{\vec w}_2$  

				\For{$j = 1, \dots, k$}
				\State $[\vec q_{j+1},\alpha_j,\beta_j]\gets \mathrm{\cref{algorithm:iter-short-rec-arnoldi}}(A, \vec q_{j-1}, \vec q_j, \xi_{j-2}, \xi_{j-1}, \xi_{j}, \beta_{j-1})$
				\EndFor
				\State $\mat Q_{k+1} \gets [\vec q_1,\dots,\vec q_{k+1}]$
			\end{algorithmic}
			\caption{Rational Lanczos} 
			\label{algorithm:short-recurrence}
		\end{algorithm}
	\end{minipage}
	\begin{minipage}{\columnwidth}
		\begin{algorithm}[H]
			\begin{algorithmic}
				\Require{$A \in \C^{n \times n}, \vec q_{1}, \vec q_2 \in \C^{n},\xi_0, \xi_1,\xi_2\in \overline{\C}, \beta_1\in \C$ }
				\Ensure{ $ \vec q_3\in \C^{n},$ $\alpha_2,\beta_2\in \C$} 
				\State  ${\vec w}\gets A(\vec q_{2}+\vec q_{1}\beta_{1}/\xi_{0})-\vec q_{1}\beta_{1}$\Comment{with the convention $\beta_1/\infty=0$}
				\State $ {\vec{s}} \gets (I-A/\xi_{1})\vec{q}_2$ \Comment{with the convention $A/\infty=0$}
				\State $[{\vec w}, \vec s]\gets (I-A/\xi_{2})^{-1}[ {\vec w}, {\vec s}]$
				\State $\alpha_2\gets ({\vec w}^H\vec q_2)/(\vec s^H \vec q_2)$
				\State $\vec w \gets {\vec w} -  \vec s \alpha_2$
				\State $\beta_2 \gets \norm{\vec w}_2$
				\State $\vec q_{3}\gets\vec w /\beta_2$
			\end{algorithmic}
			\caption{Single iteration of rational Lanczos} 
			\label{algorithm:iter-short-rec-arnoldi}
		\end{algorithm}
	\end{minipage}
\end{figure}

Another important ingredient in Krylov methods is the computation of the  projected matrix $\mat T_k = \mat Q_k^H A \mat Q_k$.
Computing $\mat T_k$ using its definition requires performing additional matrix-vector products involving $A$ and therefore it is usually avoided. Information about~$\mat T_k$ can be obtained by exploiting the Arnoldi relation \eqref{eqn:rad0} as already shown in \cite{casulli2023computation, palitta2022short}. If we assume that $\xi_{k-1}=\infty$, the last row of $\underline{\mat K}_{k-1}$ is zero by definition, and the matrix $\mat K_{k-1}$ is invertible (see \cite[Lemma~5.6]{guettel2010rational}). Therefore, writing 
 \begin{equation*}
	\mat T_k=\begin{bmatrix}
		\mat T_{k-1} & \vec b_k\\
		\vec b_k^H &*
	\end{bmatrix},
 \end{equation*}
 with $\mat T_{k-1}\in \C^{(k-1)\times (k-1)}$ and $\vec b_k\in \C^{k-1}$, and multiplying \eqref{eqn:rad0} on the right by $\mat K_{k-1}^{-1}$ and on the left by $\mat Q_{k}^H$, we obtain
 \begin{equation}\label{eqn:proj-matrix}
	\mat T_{k-1} = \mat H_{k-1} \mat K_{k-1}^{-1} \quad \text{and} \quad \vec b_k =  {\beta}_{k-1} \mat K_{k-1}^{-H} \vec e_{k-1}.
 \end{equation}
 If we also assume that there exists $s < k$ such that $\xi_{s}=\infty$, then the matrix $\mat K_k$ has the block triangular structure
 \begin{equation*}
	\mat K_{k-1} = \begin{bmatrix}
		\mat K_s & \xi_{s-1}^{-1}  {\beta}_{s-1} \vec e_{s}\vec e_1^H \\
		& \widehat {\mat K}_{k-1}
	\end{bmatrix}, \quad \text{and} \quad \mat K_{k-1}^{-1} = \begin{bmatrix}
		\mat K_s^{-1} & -\xi_{s-1}^{-1} {\beta}_{s-1} \mat K_s^{-1}\vec e_{s}\vec e_1^H\widehat {\mat K}_{k-1}^{-1} \\
		& \widehat {\mat K}_{k-1}^{-1}
	\end{bmatrix}
 \end{equation*}
 with $\mat K_s\in \C^{s\times s}$ and $\widehat{\mat K}_{k-1} \in \C^{(k-1-s) \times (k-1-s)}$. Since $\mat T_{k-1}$ is Hermitian and $\vec e_1^H \widehat{\mat K}_{k-1} = \vec e_1^H$, exploiting \eqref{eqn:proj-matrix} we have 
 \begin{equation}
	\label{eqn:projected-matrix-identity--infinite-pole}
	\mat T_{k-1}=\begin{bmatrix}
		\mat T_{s} & \vec b_{s+1} \vec e_1^H   \\
		\vec e_1 \vec b_{s+1}^H & \widehat {\mat T}_{k-1}
	\end{bmatrix}, \quad \widehat {\mat T}_{k-1}=   \widehat {\mat H}_{k-1}\widehat {\mat K}_{k-1}^{-1} - \xi_{s-1}^{-1} \beta_{s-1}^2 (\vec e_{s}^H \mat K_s^{-1}\vec e_{s})\vec e_1\vec e_1^H,
 \end{equation} 
 with $\mat T_{s}\in \C^{s\times s}$ and $\vec b_{s+1}\in \C^{s}$ defined according to \eqref{eqn:proj-matrix}. Note that only the $(1, 1)$ entry of $\widehat{\mat T}_{k-1}$ is different from the corresponding element of $\widehat{\mat H}_{k-1} \widehat{\mat K}_{k-1}^{-1}$. 

\begin{remark}\label{rmk:zero-pole}
	For simplicity, Algorithm~\ref{algorithm:short-recurrence} does not allow poles equal to zero. A way to overcome this issue is to choose $\mu \notin \vec \xi_k$ and employ Algorithm~\ref{algorithm:short-recurrence} to compute an orthonormal basis of the Krylov subspace $\rat(A-\mu I,\vec b, [\xi_i-\mu]_{i = 0}^{k-1})$ which is mathematically equivalent to $\rat(A,\vec b, \vec \xi_{k})$, but does not contain any poles equal to zero. We can then recover the projection of $A$ by adding~$\mu I$ to the projected matrix computed by exploiting the Arnoldi relation~\eqref{eqn:rad0} as shown above.
\end{remark}

We conclude this section with a simple result that will be useful in \cref{subsec:low-rank-updates}.

\begin{proposition}\label{prop:krylov_larger}
	Let $A\in \C^{n \times n}$, $\vec b\in \C^{n}$ and let $\vec \xi_k $ be a list of $k$ poles. Denoting by $\mat Q_k$ an orthonormal basis of $\rat(A,\vec b,\vec \xi_k)$, for any matrix $U$ with orthonormal columns such that $\text{span}(\mat Q_k)\subseteq \text{span}(U)$ we have 
	\begin{equation*}
		U\rat(U^HAU,U^H \vec b,\vec \xi_k)= \rat(A,\vec b,\vec \xi_k).
	\end{equation*}
\end{proposition}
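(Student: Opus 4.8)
The plan is to prove the two rational Krylov subspaces coincide by induction on the number of poles, exploiting the recursive construction of rational Krylov subspaces together with the fact that, by hypothesis, all the intermediate subspaces already lie in $\vspan(U)$. Write $B := U^H A U$ and $\vec c := U^H \vec b$, and for $1 \le i \le k$ set $\vec\xi_i := [\xi_0, \dots, \xi_{i-1}]$; for a pole $\xi \in \overline{\C}$ and a square matrix $M$ define $r_\xi(M) := (M - \xi I)^{-1}$ if $\xi$ is finite (with $M - \xi I$ assumed invertible) and $r_\xi(M) := M$ if $\xi = \infty$. The observation that drives everything is that, since $\vspan(\mat Q_k) \subseteq \vspan(U)$, for every $i \le k$ one has $\rat(A, \vec b, \vec\xi_i) \subseteq \rat(A, \vec b, \vec\xi_k) = \vspan(\mat Q_k) \subseteq \vspan(U)$; this is exactly what will let us slide the orthogonal projector $U U^H$ past $A$ and past the resolvents $(A - \xi I)^{-1}$ whenever they act on the vectors that occur in the argument.

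First I would record the elementary subspace recurrence
\[
	\rat(A, \vec b, \vec\xi_{i+1}) = \rat(A, \vec b, \vec\xi_i) + r_{\xi_i}(A)\,\rat(A, \vec b, \vec\xi_i), \qquad 1 \le i < k,
\]
together with its exact analogue for $(B, \vec c)$; both follow directly from the definition, separating the case $\xi_i = \infty$ (where $q_{i+1} = q_i$ and $\poly_i(A)\vec b = \poly_{i-1}(A)\vec b + A\,\poly_{i-1}(A)\vec b$) from the case $\xi_i$ finite (where $q_{i+1}(x) = (x - \xi_i) q_i(x)$ and $\poly_i(A)\vec b = \poly_{i-1}(A)\vec b + (A - \xi_i I)\poly_{i-1}(A)\vec b$). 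The base case $i = 1$ is then quick: $\rat(A, \vec b, \vec\xi_1) = \vspan\{\vec w\}$ with $\vec w := (A - \xi_0 I)^{-1}\vec b$ (or $\vec w := \vec b$ when $\xi_0 = \infty$), and since $\vec w \in \vspan(U)$ we have $U U^H \vec w = \vec w$, so $(B - \xi_0 I)(U^H\vec w) = U^H(A - \xi_0 I)\vec w = U^H\vec b = \vec c$; this identifies $U^H\vec w = r_{\xi_0}(B)\vec c$ and hence gives $U\rat(B, \vec c, \vec\xi_1) = \vspan\{U U^H\vec w\} = \rat(A, \vec b, \vec\xi_1)$ (the case $\xi_0 = \infty$ is even simpler, since then $\vec b \in \vspan(U)$ directly).

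The inductive step is the heart of the argument. Assuming $U\rat(B, \vec c, \vec\xi_i) = \rat(A, \vec b, \vec\xi_i)$, I would show that $U\,r_{\xi_i}(B)\vec z = r_{\xi_i}(A)\,U\vec z$ for every $\vec z \in \rat(B, \vec c, \vec\xi_i)$; the trick is to set $\vec x := r_{\xi_i}(A)\,U\vec z$ and to observe, using the recurrence and the induction hypothesis, that $\vec x \in r_{\xi_i}(A)\,\rat(A, \vec b, \vec\xi_i) \subseteq \rat(A, \vec b, \vec\xi_{i+1}) \subseteq \vspan(U)$, so that $U U^H \vec x = \vec x$; then, for finite $\xi_i$, the short computation $(B - \xi_i I)(U^H\vec x) = U^H(A - \xi_i I)\vec x = U^H U\vec z = \vec z$ yields $U^H\vec x = r_{\xi_i}(B)\vec z$, hence $U\,r_{\xi_i}(B)\vec z = U U^H\vec x = \vec x = r_{\xi_i}(A)\,U\vec z$ (the case $\xi_i = \infty$ is the analogous, shorter computation with $\vec x = A U\vec z$). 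Together with the two recurrences and the induction hypothesis, this collapses $U\rat(B, \vec c, \vec\xi_{i+1})$ to $\rat(A, \vec b, \vec\xi_i) + r_{\xi_i}(A)\rat(A, \vec b, \vec\xi_i) = \rat(A, \vec b, \vec\xi_{i+1})$, and the case $i = k$ is the assertion. The one delicate point is not the induction but its very statement: for $\rat(B, \vec c, \vec\xi_k)$ to be defined the poles must not be eigenvalues of $B = U^H A U$, which does not follow from the stated hypotheses (already a $1 \times 1$ compression of a Hermitian matrix can turn a pole into an eigenvalue), so I would take this as a standing assumption, as is implicitly required by the statement; beyond it, the only real work is the bookkeeping that keeps every vector produced inside $\vspan(U)$, which is precisely what the recurrence guarantees.
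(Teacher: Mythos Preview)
Your proof is correct, and it takes a genuinely different route from the paper's. The paper first establishes the polynomial case by showing inductively that $U(U^H A U)^h U^H\vec b = UU^H A^h\vec b$ for $h \le k$, and then reduces the rational case to it via the identification $\rat(A,\vec b,\vec\xi_k) = \kryl_k(A, q(A)^{-1}\vec b)$, finishing by checking that $U^H q(A)^{-1}\vec b = q(U^H A U)^{-1} U^H\vec b$. You instead induct directly on the number of poles, using the recurrence $\rat(A,\vec b,\vec\xi_{i+1}) = \rat(A,\vec b,\vec\xi_i) + r_{\xi_i}(A)\rat(A,\vec b,\vec\xi_i)$ and proving the key intertwining identity $U\,r_{\xi_i}(B)\vec z = r_{\xi_i}(A)\,U\vec z$ on the relevant subspace. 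The paper's approach is shorter and separates concerns cleanly (the polynomial induction is two lines, and the rational reduction is a single algebraic identity), at the cost of invoking the representation of a rational Krylov space as a polynomial Krylov space with shifted starting vector. Your approach is more self-contained and treats finite and infinite poles uniformly, and the resolvent intertwining step makes transparent \emph{why} containment in $\vspan(U)$ is exactly what is needed. You also correctly flag the implicit assumption that the poles avoid the spectrum of $U^H A U$; the paper's proof needs the same hypothesis (for $q(U^H A U)$ to be invertible) but does not comment on it.
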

\begin{proof}
	We first prove this result in the polynomial case, that is when all the poles are equal to infinity, and then we generalize the proof to the rational case.

	We are going to prove by induction on $h\le k$ that  
	\begin{equation*}
		U(U^HAU)^h U^H\vec b =UU^HA^h \vec b,
	\end{equation*} 
	that in particular implies
	\begin{equation}\label{eqn:in-lemma}
		U\sum_{j=0}^k \gamma_j (U^HAU)^jU^H\vec b = UU^H\sum_{j=0}^k \gamma_j A^j\vec b,
	\end{equation}
	for any choice of coefficients $\gamma_j \in \C$. Note that this is a slightly stronger result that, since $\text{span}(\mat Q_k)\subseteq \text{span}(U)$, for $h < k$ reduces to the thesis in the polynomial Krylov case. 	
	If $h=0$ the statement is straightforward. For a fixed $1 \le h \le k$, we have 
	\begin{equation*}
		U(U^HAU)^hU^H\vec b=UU^HAU(U^HAU)^{h-1}U^H\vec b =UU^HAUU^HA^{h-1}\vec b=UU^HA^h\vec b,
	\end{equation*} 
	where the second equality follows from the inductive hypothesis and the third one is due to  
	\begin{equation*}
		A^{h-1}\vec b \in \kryl_k(A,\vec b) \subseteq \vspan (U).
	\end{equation*}
	For the rational Krylov case, note that 
	\begin{equation*}
		\rat(A,\vec b,\vec \xi_k)= \kryl_k(A, q(A)^{-1}\vec b),
	\end{equation*}
	where $q(x):=\prod_{\xi\in \vec \xi_k, \xi\neq \infty}(x-\xi)$. In particular, denoting by $\vec c:= q(A)^{-1}\vec b$, we deduce from the polynomial case that
	\begin{equation*}
		U\kryl_k(U^HAU,U^H \vec c)= \kryl_k(A, \vec c) = \rat(A,\vec b,\vec \xi_k),
	\end{equation*}
	hence to conclude it is sufficient to prove that $U^H \vec c = q(U^HAU)^{-1}U^H\vec b$. Since the degree of $q$ is bounded by $k$, by \cref{eqn:in-lemma} we have
	\begin{equation*}
		UU^H \vec b = UU^H q(A) \vec c = Uq(U^HAU)U^H\vec c,
	\end{equation*}
	and multiplying both sides on the left by $q(U^HAU)^{-1}U^H$ we conclude.
\end{proof}

\subsection{Low-rank updates of matrix functions}
\label{subsec:low-rank-updates}

In this section we recall a result from \cite{BCKS21} on the computation of a low-rank update of a matrix function, and we slightly modify it to fit our purposes. 

\begin{theorem}
	\label{thm:BCKS}
	\cite[Theorem~3.3]{BCKS21}
	Let $A\in \C^{n\times n}$ be a Hermitian matrix, $B = [\vec b_1 \: \vec b_2] \in \C^{n \times 2}$ and $\Delta \in \C^{2 \times 2}$ Hermitian. Let $r=p/q$ be a rational function with complex conjugate roots and poles, and denote by $\vec {\xi}_k$ the list of cardinality $k:=\max\{\deg(p)+1,\deg(q)\}$ that contains the poles of $r$ and infinity in the remaining elements. We have
	\begin{equation*}
		r(A + B \Delta B^H) - r(A) = \mat Q_k \left(r(\mat Q_k^H(A + B \Delta B^H) \mat Q_k)-r(\mat Q_k^H A \mat Q_k)\right) \mat Q_k^H,
	\end{equation*}
	where $\mat Q_k$ is an orthonormal basis of the subspace $\rat(A,\vec b_1 ,\vec \xi_k) + \rat(A, \vec b_2, \vec \xi_k)$.
\end{theorem}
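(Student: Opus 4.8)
The plan is to deform $A$ into $M := A + B\Delta B^H$ along the segment $M_\delta := A + \delta B\Delta B^H$, $\delta \in [0,1]$, and to show that the matrix-valued function $\Psi(\delta) := r(M_\delta) - \mat Q_k\, r(\mat Q_k^H M_\delta \mat Q_k)\, \mat Q_k^H$ is constant; the assertion is then simply $\Psi(1) = \Psi(0)$. Write $\mathcal W := \rat(A, \vec b_1, \vec\xi_k) + \rat(A, \vec b_2, \vec\xi_k) = \vspan(\mat Q_k)$, $\hat B := \mat Q_k^H B$, $T_{M_\delta} := \mat Q_k^H M_\delta \mat Q_k = \mat Q_k^H A \mat Q_k + \delta\, \hat B \Delta \hat B^H$, and $q(x) := \prod_{\xi \in \vec\xi_k,\ \xi \ne \infty}(x - \xi)$, so $\deg q \le k$ and $\deg p \le k - 1$. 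Perturbing $\Delta$ slightly (and passing to the limit at the end) we may assume no pole of $r$ is an eigenvalue of $M_\delta$ or of $T_{M_\delta}$ for any $\delta \in [0,1]$, so that $\Psi$ is a rational, hence smooth, function on $[0,1]$.

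The key ingredient is a \emph{no-growth lemma}: $\rat(M_\delta, \vec b_i, \vec\xi_k) \subseteq \mathcal W$ for $i = 1,2$ and every $\delta \in [0,1]$. Since $\rat(M_\delta, \vec b_i, \vec\xi_k) = q(M_\delta)^{-1}\kryl_k(M_\delta, \vec b_i)$, it suffices to show that (i) $\kryl_k(M_\delta, \vec b_i) \subseteq \mathcal K := \kryl_k(A, \vec b_1) + \kryl_k(A, \vec b_2)$ and (ii) $q(M_\delta)^{-1}\mathcal K \subseteq \mathcal W$. Claim (i), and likewise $\mathrm{range}(q(M_\delta) - q(A)) \subseteq \mathcal K$, follow by expanding $M_\delta^{\,j} - A^{j} = \sum_{l=0}^{j-1} A^{l}(\delta B\Delta B^H) M_\delta^{\,j-1-l}$ and inducting on $j \le k$, using $\mathrm{range}(B) \subseteq \vspan\{\vec b_1, \vec b_2\}$. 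For (ii), observe that $q(A)^{-1}\kryl_k(A, \vec b_i) = \rat(A, \vec b_i, \vec\xi_k) \subseteq \mathcal W$, hence $q(A)^{-1}\mathcal K \subseteq \mathcal W$; writing $q(M_\delta)^{-1} = (I + N_\delta)^{-1} q(A)^{-1}$ with $N_\delta := q(A)^{-1}(q(M_\delta) - q(A))$, we get $\mathrm{range}(N_\delta) \subseteq q(A)^{-1}\mathcal K \subseteq \mathcal W$; therefore $\mathcal W$ is $N_\delta$-invariant, the invertible operator $I + N_\delta = q(A)^{-1}q(M_\delta)$ maps $\mathcal W$ bijectively onto $\mathcal W$, and so $(I + N_\delta)^{-1} - I = -(I+N_\delta)^{-1}N_\delta$ has range in $\mathcal W$; consequently $q(M_\delta)^{-1}\mathcal K = (I+N_\delta)^{-1} q(A)^{-1}\mathcal K \subseteq (I+N_\delta)^{-1}\mathcal W = \mathcal W$.

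With the no-growth lemma in hand, \cref{prop:krylov_larger} (together with the identities in its proof) applies to $M_\delta$ and gives the compression identity $g(M_\delta)\vec b_i = \mat Q_k\, g(T_{M_\delta})\, \mat Q_k^H \vec b_i$ for $g(x) = (x - z)^{-a}$ with $z$ a pole of $r$ of multiplicity $m_z \ge a \ge 1$ and for $g(x) = x^a$ with $0 \le a \le k - 1 - \deg q$ — in each case because $\rat(M_\delta, \vec b_i, \vec\eta_g) \subseteq \rat(M_\delta, \vec b_i, \vec\xi_k) \subseteq \mathcal W$, the first inclusion being an elementary degree count. Using conjugate-symmetry of the poles and the fact that $M_\delta$, $T_{M_\delta}$ are Hermitian, this yields $(M_\delta - zI)^{-a}B = \mat Q_k(T_{M_\delta} - zI)^{-a}\hat B$, $B^H(M_\delta - zI)^{-b}\mat Q_k = \hat B^H(T_{M_\delta} - zI)^{-b}$, and $M_\delta^{\,a}B = \mat Q_k T_{M_\delta}^{\,a}\hat B$ for all relevant $z, a, b$. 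Now differentiate: $\Psi'(\delta) = \mathrm{D}r(M_\delta)[B\Delta B^H] - \mat Q_k\, \mathrm{D}r(T_{M_\delta})[\hat B\Delta\hat B^H]\, \mat Q_k^H$, where $\mathrm{D}$ is the Fréchet derivative. Expanding $r = p_0 + \sum_z \sum_{a=1}^{m_z} c_{z,a}(x-z)^{-a}$ in partial fractions, $\mathrm{D}r(X)[H]$ is a finite linear combination of terms $(X - zI)^{-a}H(X - zI)^{-b}$ (with $a + b \le m_z + 1$) and $X^{a}HX^{b}$ (with $a + b \le \deg p_0 - 1$); substituting $X = M_\delta$, $H = B\Delta B^H$ and inserting the identities above, each such term equals $\mat Q_k$ times the corresponding term for $X = T_{M_\delta}$, $H = \hat B\Delta\hat B^H$, times $\mat Q_k^H$. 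Summing, $\mathrm{D}r(M_\delta)[B\Delta B^H] = \mat Q_k\, \mathrm{D}r(T_{M_\delta})[\hat B\Delta\hat B^H]\, \mat Q_k^H$, so $\Psi' \equiv 0$ on $[0,1]$; hence $\Psi(1) = \Psi(0)$, which rearranges into the claimed identity, and letting the perturbation of $\Delta$ vanish finishes the proof.

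I expect the main obstacle to be the no-growth lemma, and in particular the requirement that it hold along the \emph{whole} segment $M_\delta$, not just at $A$ and $M$: the subspace $\mathcal W$, although built only from $A$, must absorb the resolvents at the poles of $r$ of every intermediate matrix, and the clean way to see this without circularity is the observation that $(I + N_\delta)^{-1} = I + (\text{an operator with range in } \mathcal W)$ whenever $N_\delta$ has range in $\mathcal W$. A subsidiary point is the bookkeeping of which compression identities are needed, which differs slightly in the strictly proper case $\deg q = k$ (where $p_0 = 0$ but $\mathrm{range}(B)$ need not lie in $\mathcal W$) versus the general case, but is handled uniformly by the argument above.
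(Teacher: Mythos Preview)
The paper does not supply its own proof of this theorem; it is quoted from \cite[Theorem~3.3]{BCKS21} and used as a black box. So there is nothing in the present paper to compare your argument against.

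That said, your proof is correct and self-contained. The homotopy device --- differentiating $\Psi(\delta) = r(M_\delta) - \mat Q_k r(T_{M_\delta}) \mat Q_k^H$ along the segment and showing $\Psi' \equiv 0$ --- is a genuinely different route from the more direct resolvent/partial-fraction arguments typically used for such exactness results. The no-growth lemma is the crux, and your proof of it is clean: the observation that $N_\delta = q(A)^{-1}(q(M_\delta)-q(A))$ has range in $\mathcal W$, so that $(I+N_\delta)^{-1}$ preserves $\mathcal W$, is exactly the right way to push the resolvent through without circularity. Once that is in place, the compression identities $g(M_\delta)\vec b_i = \mat Q_k g(T_{M_\delta})\mat Q_k^H\vec b_i$ follow from the argument of \cref{prop:krylov_larger} (applied with $M_\delta$ in place of $A$ and $c = q(M_\delta)^{-1}\vec b_i$), and the degree bookkeeping --- in particular that $(x-z)^{-a} = [q(x)/(x-z)^a]/q(x)$ with numerator degree $\le k-1$ whenever $a \le m_z$, and that the polynomial part $p_0$ vanishes when $\deg q = k$ --- is handled correctly in both regimes you distinguish. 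The perturbation of $\Delta$ to avoid eigenvalue collisions along the path, followed by a limit, is unproblematic since both sides of the asserted identity are rational functions of the entries of $\Delta$.
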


The following proposition extends \cref{thm:BCKS} to the case where $\mat Q_k$ is replaced by a matrix $U$ such that $\vspan(U) \supseteq \rat(A, \vec b_1, \vec \xi_k) + \rat(A, \vec b_2, \vec \xi_k)$.

\begin{proposition}
	\label{prop:lowrank-update-matfun--larger-basis}
	Let $A, B, \Delta, r$ and  $\vec \xi_k$ be defined as in Theorem~\ref{thm:BCKS}. Then, we have
	\begin{equation*}
		r(A + B \Delta B^H) - r(A) = {U}\left( r({U}^H(A + B \Delta B^H){U})-r({U}^H A {U})\right){U}^H,
	\end{equation*}
	where ${U}$ is a matrix with orthonormal columns such that $\text{span} ({U})\supseteq\rat(A, \vec b_1,\vec \xi_k) + \rat(A, \vec b_2, \vec \xi_k)$.
\end{proposition}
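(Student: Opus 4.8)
The plan is to derive the identity from two applications of \cref{thm:BCKS}, one for the original triple $(A, B, \Delta)$ and one for its compression to $\vspan(U)$. Set $\tilde A := U^H A U$, $\tilde B := U^H B = [\tilde{\vec b}_1 \: \tilde{\vec b}_2]$ with $\tilde{\vec b}_i := U^H \vec b_i$, so that $U^H(A + B\Delta B^H)U = \tilde A + \tilde B\Delta\tilde B^H$; note that $\tilde A$ is Hermitian and $\Delta$ is unchanged, so the hypotheses of \cref{thm:BCKS} are available for $(\tilde A, \tilde B, \Delta)$ with the same $r$ and $\vec\xi_k$.

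First I would exhibit a convenient orthonormal basis of the subspace $\mathcal V := \rat(A, \vec b_1, \vec\xi_k) + \rat(A, \vec b_2, \vec\xi_k)$ that appears in \cref{thm:BCKS}. Since $\rat(A, \vec b_i, \vec\xi_k) \subseteq \mathcal V \subseteq \vspan(U)$, \cref{prop:krylov_larger} applies to each $\vec b_i$ and yields $U\,\rat(\tilde A, \tilde{\vec b}_i, \vec\xi_k) = \rat(A, \vec b_i, \vec\xi_k)$; summing over $i = 1, 2$ and using $U\mathcal X + U\mathcal Y = U(\mathcal X + \mathcal Y)$ for subspaces gives $U\tilde{\mathcal V} = \mathcal V$, where $\tilde{\mathcal V} := \rat(\tilde A, \tilde{\vec b}_1, \vec\xi_k) + \rat(\tilde A, \tilde{\vec b}_2, \vec\xi_k)$. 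Hence, if $\tilde{\mat Q}$ denotes an orthonormal basis of $\tilde{\mathcal V}$, the matrix $\mat Q := U\tilde{\mat Q}$ has orthonormal columns ($\mat Q^H \mat Q = \tilde{\mat Q}^H U^H U \tilde{\mat Q} = I$) and spans $U\tilde{\mathcal V} = \mathcal V$, so it is an orthonormal basis of $\mathcal V$.

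Next I would substitute $\mat Q = U\tilde{\mat Q}$ into \cref{thm:BCKS}, which is legitimate because the right-hand side of that identity is invariant under $\mat Q \mapsto \mat Q W$ with $W$ unitary (using $r(W^H X W) = W^H r(X) W$), hence independent of the chosen orthonormal basis of $\mathcal V$. Using $U^H(A + B\Delta B^H)U = \tilde A + \tilde B\Delta\tilde B^H$ and $U^H A U = \tilde A$, this produces
\begin{equation*}
	r(A + B\Delta B^H) - r(A) = U\tilde{\mat Q}\Bigl( r\bigl(\tilde{\mat Q}^H(\tilde A + \tilde B\Delta\tilde B^H)\tilde{\mat Q}\bigr) - r\bigl(\tilde{\mat Q}^H \tilde A \tilde{\mat Q}\bigr)\Bigr)\tilde{\mat Q}^H U^H.
\end{equation*}
Now I would identify the inner parenthesis: applying \cref{thm:BCKS} to the data $(\tilde A, \tilde B, \Delta)$, for which $\tilde{\mat Q}$ is by construction an orthonormal basis of $\tilde{\mathcal V} = \rat(\tilde A, \tilde{\vec b}_1, \vec\xi_k) + \rat(\tilde A, \tilde{\vec b}_2, \vec\xi_k)$, it equals $r(\tilde A + \tilde B\Delta\tilde B^H) - r(\tilde A) = r(U^H(A + B\Delta B^H)U) - r(U^H A U)$. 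Substituting this back and absorbing the outer $U$ and $U^H$ gives exactly the claimed identity.

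The step that needs the most care is the use of \cref{thm:BCKS} for the compressed matrix $\tilde A = U^H A U$ (and, already, the use of \cref{prop:krylov_larger} for the subspaces $\rat(\tilde A, \tilde{\vec b}_i, \vec\xi_k)$): one must verify that the hypotheses are genuinely inherited, in particular that the finite poles of $r$ are not eigenvalues of $\tilde A$ or of $\tilde A + \tilde B\Delta\tilde B^H$, so that the compressed rational Krylov subspaces and $r(\tilde A)$ are well defined. This is where the Hermitian structure enters: the eigenvalues of $\tilde A$ lie in $\mathbb{W}(A)$ and those of $\tilde A + \tilde B\Delta\tilde B^H$ in $\mathbb{W}(A + B\Delta B^H)$, so as soon as the poles of $r$ avoid these fields of values — which is the natural assumption under which $r(A)$, $r(A+B\Delta B^H)$ and the rational Krylov subspaces in \cref{thm:BCKS} are robustly defined — the required non‑degeneracy is automatic. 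The remaining ingredients (orthonormality of $U\tilde{\mat Q}$, basis‑independence of the right‑hand side, and the algebraic identity $U^H(A+B\Delta B^H)U = \tilde A + \tilde B\Delta\tilde B^H$) are routine.
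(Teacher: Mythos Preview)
Your proof is correct and follows essentially the same strategy as the paper: two applications of \cref{thm:BCKS} (once for $A$, once for the compressed matrix $U^HAU$), linked through \cref{prop:krylov_larger} to identify the rational Krylov subspaces up- and downstairs. The only cosmetic difference is the direction in which you run the correspondence: the paper starts from a basis $\mat Q_k$ of $\mathcal V$, inserts $UU^H$, and then recognizes $U^H\mat Q_k$ as an orthonormal basis of $\tilde{\mathcal V}$, whereas you start from $\tilde{\mat Q}$ and lift it to $\mat Q = U\tilde{\mat Q}$; these are the same argument read from opposite ends.
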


\begin{proof}
	First of all, we note that, since $\text{span} ({U})\supseteq\rat(A,\vec b_1,\vec \xi_k) + \rat(A, \vec b_2, \vec \xi_k)$ we have
	\begin{equation*}
		{U}{U}^H \mat Q_k = \mat Q_k,
	\end{equation*}
	where $ \mat Q_k$ is an orthonormal basis of $\rat(A,\vec b_1,\vec \xi_k) + \rat(A, \vec b_2, \vec \xi_k)$. Moreover, by \cref{thm:BCKS} we have 
	\begin{equation*}
		r(A + B \Delta B^H) - r(A) = \mat Q_k Y_k(r) \mat Q_k^H={U}{U}^H \mat Q_kY_k(r) \mat Q_k^H{U}{U}^H,
	\end{equation*}
	with
	\begin{equation*}
		Y_k(r):=r( \mat Q_k^H(A + B \Delta B^H) \mat Q_k)-r( \mat Q_k^H A \mat Q_k).
	\end{equation*}	
	To conclude it is sufficient to prove that 
	\begin{equation*}
		{U}^H \mat Q_kY_k(r) \mat Q_k^H{U}=r({U}^H(A + B \Delta B^H){U}) - r({U}^H A{U}).
	\end{equation*}
	It follows from Proposition~\ref{prop:krylov_larger} that $U^H \mat Q_k$ is an orthonormal basis of the subspace
	\begin{equation*}
		\rat({U}^HA{U}, U^H \vec b_1, \vec \xi_k) + \rat( U^H A U, U^H \vec b_2, \vec \xi_k) = U^H (\rat(A, \vec b_1, \xi_k) + \rat(A, \vec b_2, \xi_k)),
	\end{equation*}
	and by Theorem~\ref{thm:BCKS} we obtain 
	\begin{align*}
		&r({U}^H(A + B \Delta B^H){U}) - r({U}^HA{U})\\
		&\quad = {U}^H \mat Q_k \left( r( \mat Q_k^H{U}{U}^H(A + B \Delta B^H){U}{U}^H \mat Q_k)-r( \mat Q_k^H{U}{U}^HA{U}{U}^H \mat Q_k) \right) \mat Q_k^H{U}\\
		&\quad= {U}^H \mat Q_k(r( \mat Q_k^H(A + B \Delta B^H) \mat Q_k)-r( \mat Q_k^HA \mat Q_k)) \mat Q_k^H{U}\\
		&\quad= {U}^H \mat Q_kY_k(r) \mat Q_k^H{U}.
	\end{align*}
\end{proof}

The following immediate corollaries are going to be fundamental in the following sections.

\begin{corollary}
	\label{cor:lowrank-update-matfun--eye}
	Assume that $r = p/q$ and $\vec \xi_k$ are defined as in \cref{thm:BCKS} and let $A \in \C^{n \times n}$ be a Hermitian matrix, partitioned as
	\begin{equation*}
		A=\begin{bmatrix}
			A_{11}&\\&A_{22}
		\end{bmatrix} + \begin{bmatrix}
			&A_{12}\\A_{12}^H &
		\end{bmatrix},
	\end{equation*}
	with $A_{11} \in \C^{m \times m}$. Assume that $A_{12}= \vec b \vec c^H$, where $\vec b\in \C^{m}$ and $\vec c\in \C^{n-m}$. Then
	\begin{equation}
		\label{eqn:lowrank-update-matfun}
		r(A)= \begin{bmatrix}
			r(A_{11}) & \\
			& r(A_{22})
		\end{bmatrix} + \begin{bmatrix}
			U_k&\\&I
		\end{bmatrix}X_k(r)\begin{bmatrix}
			U_k^H&\\&I
		\end{bmatrix},
	\end{equation}
	where $U_k$ is an orthonormal basis of $\rat(A_{11},\vec b,\vec \xi_k)$, $I$ is the $(n - m) \times (n - m)$ identity matrix and  
	\begin{equation}
		\label{eqn:lowrank-update-corematrix}
		X_k(r) = r\left(\begin{bmatrix}
			U_k^HA_{11}U_k & U_k^H\vec b\vec c^H \\
			\vec c \vec b^HU_k & A_{22}
		\end{bmatrix}\right) - 
		\begin{bmatrix}
			r(U_k^HA_{11}U_k) & \\
			& r(A_{22})
		\end{bmatrix}.
	\end{equation}
	In particular, for any vector of the form $\begin{bmatrix} \vec v \\ \vec 0\end{bmatrix}$ where $\vec v\in \C^m$, we have
	\begin{equation}
		\label{eqn:lowrank-update-matfun-times-vector}
		r(A)\begin{bmatrix} \vec v \\ \vec 0 \end{bmatrix} = \begin{bmatrix}
			\vec y \\
			\vec 0
		\end{bmatrix} + \begin{bmatrix} \vec w \\ \vec 0\end{bmatrix} + \vec r,
	\end{equation}
	where
	\begin{equation*}
		\vec y =  r(A_{11}) \vec v, \qquad \vec w = - U_k r(U_k^H A_{11} U_k) U_k^H \vec v,
	\end{equation*}
	and
	\begin{equation*}
		\vec r = \begin{bmatrix}
			U_k & \\
			& I
		\end{bmatrix} r\left(\begin{bmatrix}
			U_k^HA_{11}U_k & U_k^H\vec b \vec c^H \\
			\vec c \vec b^HU_k & A_{22}
		\end{bmatrix}\right)  \begin{bmatrix}
			U_k^H \vec v \\ 0
		\end{bmatrix}.
	\end{equation*}
\end{corollary}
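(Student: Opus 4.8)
\emph{Proof proposal.} The plan is to derive \eqref{eqn:lowrank-update-matfun} as an instance of \cref{prop:lowrank-update-matfun--larger-basis}, by writing $A$ as a Hermitian rank-two perturbation of the block-diagonal matrix $D := \diag(A_{11}, A_{22})$, and then to specialize the resulting identity to a vector in order to obtain \eqref{eqn:lowrank-update-matfun-times-vector}. First I would write $A = D + B \Delta B^H$ with
\[
  B = \begin{bmatrix} \vec b & \vec 0 \\ \vec 0 & \vec c \end{bmatrix} \in \C^{n \times 2}, \qquad
  \Delta = \begin{bmatrix} 0 & 1 \\ 1 & 0 \end{bmatrix} \in \C^{2 \times 2}.
\]
Then $\Delta$ is Hermitian, a direct multiplication gives $B \Delta B^H = \begin{bmatrix} & \vec b \vec c^H \\ \vec c \vec b^H & \end{bmatrix}$, which is exactly the off-diagonal part of $A$, and the poles of $r$ are not eigenvalues of $D$ since $r(A_{11})$ and $r(A_{22})$ appear in the statement, so that $r$ must be defined on the spectra of $A_{11}$ and $A_{22}$. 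Hence $D$, $B$, $\Delta$, $r$ and $\vec\xi_k$ satisfy the hypotheses of \cref{thm:BCKS} and \cref{prop:lowrank-update-matfun--larger-basis}.

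Next I would identify the rational Krylov subspaces of $D$ generated by the two columns $\vec b_1 = \bigl[\begin{smallmatrix}\vec b\\\vec 0\end{smallmatrix}\bigr]$ and $\vec b_2 = \bigl[\begin{smallmatrix}\vec 0\\\vec c\end{smallmatrix}\bigr]$ of $B$. Since $D$ is block diagonal, every element $q(D)^{-1}p(D)\vec b_1$ of $\rat(D,\vec b_1,\vec\xi_k)$ equals $\bigl[\begin{smallmatrix}q(A_{11})^{-1}p(A_{11})\vec b\\\vec 0\end{smallmatrix}\bigr]$, so $\rat(D, \vec b_1, \vec\xi_k) = \rat(A_{11}, \vec b, \vec\xi_k) \oplus \{\vec 0\}$, and symmetrically $\rat(D, \vec b_2, \vec\xi_k) = \{\vec 0\} \oplus \rat(A_{22}, \vec c, \vec\xi_k)$. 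Therefore the matrix $U := \begin{bmatrix} U_k & \\ & I \end{bmatrix}$, with $I$ the $(n-m) \times (n-m)$ identity, has orthonormal columns and $\vspan(U) = \vspan(U_k) \oplus \C^{n-m}$ contains $\rat(D, \vec b_1, \vec\xi_k) + \rat(D, \vec b_2, \vec\xi_k)$. Applying \cref{prop:lowrank-update-matfun--larger-basis} with base matrix $D$, perturbation $B\Delta B^H$ and this $U$ gives $r(A) - r(D) = U\bigl( r(U^H A U) - r(U^H D U) \bigr) U^H$. Computing the blocks, $U^H D U = \diag(U_k^H A_{11} U_k, A_{22})$ while $U^H A U = \begin{bmatrix} U_k^H A_{11} U_k & U_k^H \vec b \vec c^H \\ \vec c \vec b^H U_k & A_{22} \end{bmatrix}$, hence $r(U^H A U) - r(U^H D U) = X_k(r)$ as in \eqref{eqn:lowrank-update-corematrix}; combined with $r(D) = \diag(r(A_{11}), r(A_{22}))$ this is exactly \eqref{eqn:lowrank-update-matfun}.

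Finally, to obtain \eqref{eqn:lowrank-update-matfun-times-vector} I would apply \eqref{eqn:lowrank-update-matfun} to the vector $\bigl[\begin{smallmatrix}\vec v\\\vec 0\end{smallmatrix}\bigr]$, use $\begin{bmatrix} U_k^H & \\ & I \end{bmatrix}\bigl[\begin{smallmatrix}\vec v\\\vec 0\end{smallmatrix}\bigr] = \bigl[\begin{smallmatrix}U_k^H \vec v\\\vec 0\end{smallmatrix}\bigr]$, expand $X_k(r)$ through its definition \eqref{eqn:lowrank-update-corematrix}, and collect the three resulting contributions: the term $r(D)\bigl[\begin{smallmatrix}\vec v\\\vec 0\end{smallmatrix}\bigr]$ yields $\bigl[\begin{smallmatrix}\vec y\\\vec 0\end{smallmatrix}\bigr]$ with $\vec y = r(A_{11})\vec v$; the subtracted block-diagonal term of $X_k(r)$, whose action on $\bigl[\begin{smallmatrix}U_k^H \vec v\\\vec 0\end{smallmatrix}\bigr]$ only involves its $(1,1)$ block, yields $\bigl[\begin{smallmatrix}\vec w\\\vec 0\end{smallmatrix}\bigr]$ with $\vec w = -U_k r(U_k^H A_{11} U_k) U_k^H \vec v$; and the full matrix-function term of $X_k(r)$ yields $\vec r$. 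I do not expect a genuine obstacle here: the argument is an application of \cref{prop:lowrank-update-matfun--larger-basis} followed by block bookkeeping, and the only points requiring a little care are the identification of the Krylov subspaces of the block-diagonal matrix $D$ and the verification that the poles of $r$ avoid the spectra of $A_{11}$ and $A_{22}$, which is guaranteed by the matrix functions appearing in the statement being well defined.
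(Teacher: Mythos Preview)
Your proposal is correct and follows essentially the same route as the paper: write $A$ as the block-diagonal matrix plus a rank-two Hermitian perturbation $B\Delta B^H$ with $\Delta=\bigl[\begin{smallmatrix}0&1\\1&0\end{smallmatrix}\bigr]$, observe that the two rational Krylov subspaces of the block-diagonal matrix are contained in $\vspan\bigl(\diag(U_k,I)\bigr)$, and invoke \cref{prop:lowrank-update-matfun--larger-basis}. The paper's proof is slightly terser (it introduces the basis $V_k$ of $\rat(A_{22},\vec c,\vec\xi_k)$ before enlarging to $I$, and treats \eqref{eqn:lowrank-update-matfun-times-vector} as an immediate consequence without spelling out the three contributions), but the argument is the same.
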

\begin{proof}
	We can write
	\begin{equation*}
		\begin{bmatrix}
			& A_{12} \\
			A_{12}^H &
		\end{bmatrix} = \begin{bmatrix}
			\vec b & \\
			& \vec c
		\end{bmatrix} \begin{bmatrix}
			0 & 1 \\
			1 & 0
		\end{bmatrix} \begin{bmatrix}
			\vec b^H & \\
			& \vec c^H
		\end{bmatrix}.
	\end{equation*}
	Due to the block diagonal structure, we have
	\begin{equation*}
		\rat \left(\begin{bmatrix}
			A_{11} & \\
			& A_{22}
		\end{bmatrix}, \begin{bmatrix}
			\vec b \\
			0
		\end{bmatrix}, \vec \xi_k \right)  + \rat \left(\begin{bmatrix}
			A_{11} & \\
			& A_{22}
		\end{bmatrix}, \begin{bmatrix}
			0 \\
			\vec c
		\end{bmatrix}, \vec \xi_k \right) = \vspan \left( \begin{bmatrix}
			U_k & \\
			& V_k
		\end{bmatrix} \right),
	\end{equation*}
	where $V_k$ is an orthonormal basis of $\rat(A_{22}, \vec c, \vec \xi_k)$. 
	Since
	\begin{equation*}
		\left( \begin{bmatrix}
			U_k & \\
			& V_k
		\end{bmatrix} \right) \subseteq \vspan \left( \begin{bmatrix}
			U_k & \\
			& I
		\end{bmatrix} \right) \in \C^{n \times (k + n-m)},
	\end{equation*}
	we can use \cref{prop:lowrank-update-matfun--larger-basis} to obtain \cref{eqn:lowrank-update-matfun}, and then \cref{eqn:lowrank-update-matfun-times-vector} follows as an immediate consequence.
\end{proof}

\begin{corollary}
	\label{cor:lowrank-update-matfun--eye--general-f}
	Let $A$ be as in \cref{cor:lowrank-update-matfun--eye} and let $f$ be a function that is analytic on the numerical range $\mathbb{W}(A)$. Given a list of poles $\vec \xi_k \subset \overline{\C}$, let $U_k$ be an orthonormal basis of~$\rat(A_{11}, \vec b, \vec \xi_k)$. We have
	\begin{equation*}
		f(A) - \begin{bmatrix}
			f(A_{11}) & \\
			& f(A_{22})
		\end{bmatrix} - \begin{bmatrix}
			U_k&\\&I
		\end{bmatrix} X_k(f) \begin{bmatrix}
			U_k^H&\\&I
		\end{bmatrix} = E_k(f),
	\end{equation*}
	with 
	\begin{equation}
		\label{eqn:lowrank-update-matfun--general-f-error}
		\norm{E_k(f)}_F \le 4 \min_{r \in \poly_{k-1}/q} \norm{f - r}_{\mathbb{W}(A)},
	\end{equation}
	where $q(x) =\prod_{\xi\in \vec \xi_k \,,\, \xi \neq \infty}(x-\xi).$ 
\end{corollary}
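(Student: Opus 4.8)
The plan is to obtain the estimate from \cref{cor:lowrank-update-matfun--eye} by evaluating both sides on a rational approximant of $f$ and using linearity. For a scalar function $g$ analytic on $\mathbb{W}(A)$, write
\begin{equation*}
	E_k(g) := g(A) - \begin{bmatrix} g(A_{11}) & \\ & g(A_{22}) \end{bmatrix} - \begin{bmatrix} U_k & \\ & I \end{bmatrix} X_k(g) \begin{bmatrix} U_k^H & \\ & I \end{bmatrix},
\end{equation*}
where $X_k(g)$ is given by \cref{eqn:lowrank-update-corematrix} with $r$ replaced by $g$; the goal is then $\norm{E_k(f)}_F \le 4\min_{r \in \poly_{k-1}/q}\norm{f-r}_{\mathbb{W}(A)}$.

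First I would show that $E_k(r) = 0$ for every $r \in \poly_{k-1}/q$. Writing such an $r$ in lowest terms, its finite poles lie among the roots of $q$, i.e.\ among the finite entries of $\vec\xi_k$, so the pole list associated with $r$ in \cref{thm:BCKS} (padded with infinities) spans a rational Krylov subspace of $A_{11}$ contained in $\rat(A_{11}, \vec b, \vec\xi_k)$; hence \cref{prop:lowrank-update-matfun--larger-basis} (via \cref{prop:krylov_larger}) lets us apply \cref{cor:lowrank-update-matfun--eye} with the single basis $U_k$, which is exactly the statement $E_k(r) = 0$. Since $g \mapsto g(H)$ is $\C$-linear for every fixed Hermitian $H$, and $X_k(g)$ is assembled from such evaluations, the map $g \mapsto E_k(g)$ is $\C$-linear, so $E_k(f) = E_k(f) - E_k(r) = E_k(f-r)$ for every such $r$.

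Next I would set $g := f - r$ and expand. Using $X_k(g) = g(M) - \diag\bigl(g(U_k^H A_{11} U_k),\, g(A_{22})\bigr)$ with $M$ the Hermitian matrix appearing in \cref{eqn:lowrank-update-corematrix}, the two copies of $g(A_{22})$ cancel and one is left with
\begin{equation*}
	E_k(f) = g(A) - \begin{bmatrix} U_k & \\ & I \end{bmatrix} g(M) \begin{bmatrix} U_k^H & \\ & I \end{bmatrix} - \begin{bmatrix} g(A_{11}) - U_k\, g(U_k^H A_{11} U_k)\, U_k^H & \\ & 0 \end{bmatrix}.
\end{equation*}
This presents $E_k(f)$ as a combination of four matrix functions of the Hermitian matrices $A$, $M = \diag(U_k^H, I)\,A\,\diag(U_k, I)$, $A_{11}$ and $U_k^H A_{11} U_k$, each conjugated by a matrix with orthonormal columns or embedded in a zero block. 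Since $M$, $A_{11}$ and $U_k^H A_{11} U_k$ are compressions of $A$ by matrices with orthonormal columns, their numerical ranges are contained in $\mathbb{W}(A)$; for a Hermitian $H$ with $\mathbb{W}(H) \subseteq \mathbb{W}(A)$ one has $\norm{g(H)} = \max_{\lambda \in \sigma(H)} \abs{g(\lambda)} \le \norm{g}_{\mathbb{W}(A)}$, and multiplying by a matrix with orthonormal columns (or padding by zeros) does not increase the norm. Applying this with $g = f - r$ and the triangle inequality over the four terms gives $\norm{E_k(f)} \le 4\norm{f-r}_{\mathbb{W}(A)}$, and taking the infimum over $r \in \poly_{k-1}/q$ yields \cref{eqn:lowrank-update-matfun--general-f-error}.

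I expect the main point to watch is the cancellation that leaves \emph{exactly} four terms: if one expands $E_k(f-r)$ and estimates term by term before removing the $g(A_{22})$ contributions, the argument produces five or six terms and a larger constant, so the factor $4$ is tied to that cancellation (and, implicitly, to the structure $A_{12} = \vec b\vec c^H$ rather than a generic low-rank off-diagonal block). A secondary, purely bookkeeping difficulty is the verification of $E_k(r) = 0$ in the first step for \emph{all} $r \in \poly_{k-1}/q$ — numerators of degree below $k-1$, repeated or infinite poles, and cancellations between numerator and denominator — which is exactly what the subspace-monotonicity of \cref{prop:krylov_larger} handles.
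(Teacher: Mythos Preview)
Your argument is correct and is precisely the approach the paper has in mind: the paper's proof is a one-line reference to \cite[Theorem~4.5]{BCKS21}, whose proof is exactly your linearity-plus-four-terms estimate (exactness of $E_k(r)$ for $r\in\poly_{k-1}/q$, then bound each of the four matrix-function evaluations of $g=f-r$ on compressions of $A$ by $\norm{g}_{\mathbb W(A)}$). Your handling of the exactness step via \cref{prop:lowrank-update-matfun--larger-basis} for arbitrary $r\in\poly_{k-1}/q$ (not just those whose reduced denominator is exactly $q$) is the right way to close that bookkeeping gap.

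One remark: the bound you actually prove is in the spectral norm, since you use $\norm{g(H)}=\max_{\lambda\in\sigma(H)}\abs{g(\lambda)}$, which holds for $\norm{\cdot}_2$ but not for $\norm{\cdot}_F$. The $\norm{\cdot}_F$ in \cref{eqn:lowrank-update-matfun--general-f-error} appears to be a typo in the paper --- indeed, when the bound is invoked later in the proof of \cref{prop:lowmem-rational-lanczos-error-bound} it is applied as a bound on $\norm{E_i(f)}_2$. So your spectral-norm version is both what your argument yields and what the paper actually needs.
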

\begin{proof}
	The bound \cref{eqn:lowrank-update-matfun--general-f-error} easily follows from the exactness property of \cref{cor:lowrank-update-matfun--eye} for rational functions, with the same proof as \cite[Theorem~4.5]{BCKS21}. 
\end{proof}

\section{Algorithm description} 
\label{sec:algorithm}

In this section we present a low-memory implementation of a Krylov subspace method for the computation of $f(A) \vec b$ for a Hermitian matrix $A$ and a vector $\vec b$. 
On a high level, we use an outer Krylov subspace, which can be either polynomial or rational, combined with an inner rational Krylov subspace that is employed to compress the outer Krylov subspace basis and reduce the memory usage. The inner Krylov subspace is constructed using the projection of $A$ on the outer Krylov subspace, so it does not involve any expensive operations with the matrix $A$.  
This approach is designed for scenarios where the outer Krylov method requires a large number of iterations to converge, in order to take full advantage of the basis compressions.
The outer iterations should be cheaper than the solution of shifted linear systems involving $A$ and inner poles, otherwise it would be more efficient to simply use a rational Krylov method associated with the inner poles directly on the matrix~$A$. 
Therefore, an ideal choice for the outer subspace is polynomial Krylov, and other viable options are rational Krylov methods with a few repeated poles, such as extended \cite{druskin98extended} and shift-and-invert \cite{moret04rd} Krylov methods. 

The algorithm is composed of $s$ cycles, with each cycle consisting in~$m$ iterations of the outer Krylov method, and a subsequent compression of the basis to $k$ vectors.
At any given moment, the algorithm keeps in memory at most $m + k$ basis vectors and some additional quantities (whose storage is not dependent on $n$), such as projected matrices of size $m+k$.
In total, the algorithm performs $M = k + ms$ iterations of the outer Krylov subspace method, and the approximation computed at the end of each cycle coincides with the approximation computed by a standard implementation of the outer Krylov method, up to an error due to the rational approximation done in the inner Krylov subspace, which is usually negligible. 
This error is zero in the case where $f$ is a rational function of type $(k-1, k)$, so for simplicity we start by describing the algorithm in this special case.  

Let $r$ be a rational function of type $(k-1, k)$, i.e., $r(z) = p(z)/q(z)$ with $p$ and $q$ polynomials of degrees at most $k-1$ and $k$, respectively. We assume that $r$ has complex conjugate roots and poles. We let $\vec \xi_k$ be the list of poles $[\xi_0, \dots, \xi_{k-1}]$ containing the roots of $q$ and $\infty$ in the remaining entries in the case $\deg(q) < k$.  These poles will be used for the inner rational Krylov iteration. 
Our goal is to construct an approximation of $r(A) \vec b$ from the outer  Krylov subspace $\rat(A, \vec b, \vec \theta_M)$, where $M = k + ms$ and $\vec \theta_M$ is the associated list of complex conjugate poles $[\theta_0, \theta_1, \dots, \theta_{M-1}] \subset \overline{\C}$. We assume that $\theta_0 = \infty$, and $\theta_{jm + k} = \infty$, for $j = 1, \dots, s-1$. We denote by $\vec \theta_\ell$ the list $\vec \theta_\ell = [\theta_0, \dots, \theta_{\ell - 1}]$, for $\ell < M$. Throughout this section, we are going to denote by $\mat Q_j$, $\underline{\mat H}_j$, $\underline{\mat K}_j$ and $\mat T_j$ the matrices associated to the outer Krylov subspace $\rat(A, \vec b, \vec \theta_j)$, according to the notation introduced in \cref{subsec:krylov-background}.  

We also introduce some additional notation for the matrices associated with the outer Krylov subspace $\rat(A, \vec b, \vec \theta_M)$, in order to simplify the exposition in this section.
We denote by~$\widehat{Q} := \mat Q_M$ the orthonormal basis of $\rat(A, \vec b, \vec \theta_M)$ and by~$\widehat{T} := \widehat{Q}^HA\widehat Q = \mat T_M$.
We denote the approximation~\cref{eqn:krylov-matfun-approx} to $r(A) \vec b$ from $\rat(A, \vec b, \vec \theta_M)$ by
\begin{equation}
	\label{eqn:final-approximation-yhat}
	\widehat{\vec y} := \widehat{Q} \, r(\widehat{T}) \vec e_1 \norm{\vec b}_2,
\end{equation}   
where $\vec e_1 \in \R^M$ is the first unit basis vector. We split $\widehat{Q} = \begin{bmatrix}
	Q_1 & Q_2 & \dots & Q_s
\end{bmatrix}$, with~$Q_1 \in \C^{n \times (k+m)}$ and $Q_i \in \C^{n \times m}$ for $i = 2, \dots, s$. We also use the notation $\widehat{Q}_i = \begin{bmatrix}
	Q_i & \dots & Q_s
\end{bmatrix}$, so that we have $\widehat{Q}_i = \begin{bmatrix}
	Q_i & \widehat{Q}_{i+1}
\end{bmatrix}$ for $i = 1, \dots, s-1$.

We introduce a similar notation for the matrix $\widehat{T}$. 
We denote by $T_i$ the $i$-th block on the block diagonal of $\widehat{T}$, with $T_1 \in \C^{(k+m) \times (k+m)}$ and $T_i \in \C^{m \times m}$ for $i = 2, \dots, s$, and by $\widehat{T}_{i+1} \in \C^{(s-i)m \times (s-i)m}$ the trailing block on the diagonal after $T_{i}$. So we have
\begin{equation*}
	\widehat{T} = \begin{bmatrix}
		T_1 & \vec b_1 \vec e_1^H \\
		\vec e_1 \vec b_1^H & \widehat{T}_2
	\end{bmatrix}, \qquad \widehat{T}_i = \begin{bmatrix}
		T_i & \vec b_i \vec e_1^H \\
		\vec e_1 \vec b_i^H & \widehat{T}_{i+1}
	\end{bmatrix}, \quad 2 \le i \le s-1,
\end{equation*} 
where $\vec b_1 \in \C^{m+k}$, $\vec b_i \in \C^m$ for $i = 1, \dots, s-1$, $\vec e_1$ is the first unit base vector of the appropriate dimension, and $\widehat{T}_s = T_s$. The block structure of $\widehat{T}$ is a consequence of the poles at infinity in $\vec \theta_M$, as shown in \cref{subsec:krylov-background}.  

\subsection{First cycle}

We have 
\begin{equation*}
	\widehat{T} = \begin{bmatrix}
		T_1 & \\
			& \widehat{T}_2
	\end{bmatrix} + \begin{bmatrix}
		& \vec b_1 \vec e_1^H \\
		\vec e_1 \vec b_1^H &
	\end{bmatrix}.
\end{equation*}
Let $U_1 \in \C^{(m + k) \times k}$ be an orthonormal basis of the rational Krylov subspace~$\rat(T_1, \vec b_1, \vec \xi_k)$.
Using \cref{cor:lowrank-update-matfun--eye}, we can write
\begin{equation*}
	r(\widehat{T}) \vec e_1 \norm{\vec b}_2 = r\left(\begin{bmatrix}
		T_1 & \vec b_1 \vec e_1^H \\
		\vec e_1 \vec b_1^H & \widehat{T}_2
	\end{bmatrix}\right)\begin{bmatrix} \vec e_1 \norm{\vec b}_2 \\ \vec 0\end{bmatrix}= \begin{bmatrix}
		\widetilde{\vec y}_1 \\ \vec 0
	\end{bmatrix} + \begin{bmatrix} \widetilde{\vec w}_1 \\ \vec 0\end{bmatrix} + \widetilde{\vec r}_1,
\end{equation*}
where
\begin{equation*}
	\widetilde{\vec y}_1 = r(T_1) \vec e_1 \norm{\vec b}_2, \qquad \widetilde{\vec w}_1 = - U_1 r(U_1^H T_1 U_1) U_1^H \vec e_1 \norm{\vec b}_2,
\end{equation*}
and
\begin{equation*}
	\widetilde{\vec r}_1 = \begin{bmatrix}
		U_1 & \\
		& I
	\end{bmatrix} 
	r\bigg(\begin{bmatrix}
		U_1^H T_1 U_1 &  U_1^H \vec b_1 \vec e_1^H \\
		\vec e_1 \vec b_1^H U_1 & \widehat{T}_2
	\end{bmatrix} \bigg) 
	\begin{bmatrix}
		U_1^H \vec e_1 \norm{\vec b}_2 \\ 0
	\end{bmatrix}.
\end{equation*}
Recalling the notation introduced above, we have
\begin{equation*}
	\widehat{Q} r(\widehat{T}) \vec e_1 \norm{\vec b}_2 = \begin{bmatrix}
		Q_1 & \widehat{Q}_2 
	\end{bmatrix} \left(\begin{bmatrix}
		\widetilde{\vec y}_1 \\
		\vec 0
	\end{bmatrix}  + \begin{bmatrix}
		\widetilde{\vec w}_1 \\
		\vec 0
	\end{bmatrix} + \widetilde{\vec r}_1 \right) = Q_1 \widetilde{\vec y}_1 + Q_1 \widetilde{\vec w}_1 + \begin{bmatrix}
		Q_1 & \widehat{Q}_2
	\end{bmatrix} \widetilde{\vec r}_1. \\
\end{equation*}
To summarize, we have obtained
\begin{equation}
	\label{eqn:krylov-update--first-iteration-solution-identity}
	\widehat{\vec y} = \widehat{Q} \, r(\widehat{T}) \vec e_1 \norm{\vec b}_2 = \vec y_1 + \vec w_1 + \vec r_1,
\end{equation}
where
\begin{equation*}
	\vec y_1 = Q_1 r(T_1) \vec e_1 \norm{\vec b}_2
\end{equation*}
is the approximation~\cref{eqn:krylov-matfun-approx} to $r(A) \vec b$ from $\rat(A, \vec b, \vec \theta_{k+m})$,
\begin{equation*}
	\vec w_1 = - Q_1 U_1 r(U_1^H T_1 U_1) U_1^H \vec e_1 \norm{\vec b}_2
\end{equation*}
and
\begin{equation}
	\label{eqn:krylov-update--remainder-first-iteration-temporary}
	\vec r_1 = \begin{bmatrix}
		Q_1 U_1 & \widehat{Q}_2
	\end{bmatrix} 
	r\bigg(\begin{bmatrix}
		U_1^H T_1 U_1	& U_1^H \vec b_1 \vec e_1^H \\
		\vec e_1 \vec b_1^H U_1 & \widehat{T}_2
	\end{bmatrix}\bigg) 
	\begin{bmatrix}
		U_1^H \vec e_1 \norm{\vec b}_2 \\ 0
	\end{bmatrix}.
\end{equation}
After the first $k+m$ iterations of the outer Krylov subspace method, we can compute~$\vec y_1$ and~$\vec w_1$, but the remainder $\vec r_1$ still involves $\widehat{Q}_2$ and $\widehat{T}_2$, which have not been computed yet.  
A crucial observation that allows us to save memory is that in \cref{eqn:krylov-update--remainder-first-iteration-temporary} the basis $Q_1$ only appears in the product $Q_1 U_1$, so from now on it is no longer necessary to keep in memory the whole matrix $Q_1$.

Introducing the notation $V_1 := Q_1$, $S_1 := T_1$, $\vec v_1 := \vec e_1 \norm{\vec b}_2$ and $\widetilde{\vec b}_1 := \vec b_1$, we can rewrite \cref{eqn:krylov-update--remainder-first-iteration-temporary} as
\begin{equation}
	\label{eqn:krylov-update--remainder-first-iteration}
	\vec r_1 = \begin{bmatrix}
		V_1 U_1 & \widehat{Q}_2
	\end{bmatrix} 
	r\left(\begin{bmatrix}
		U_1^H S_1 U_1	& U_1^H \widetilde{\vec b}_1 \vec e_1^H \\
		\vec e_1 \widetilde{\vec b}_1^H U_1 & \widehat{T}_2
	\end{bmatrix} \right) 
	\begin{bmatrix}
		U_1^H \vec v_1 \\ 0
	\end{bmatrix}.
\end{equation}
The notation introduced for \cref{eqn:krylov-update--remainder-first-iteration} sets us up for describing the $i$-th cycle of the algorithm.

\subsection{$i$-th cycle}
At the beginning of the $i$-th cycle, with $2 \le i \le s-1$, our task is to compute the remainder $\vec r_{i-1}$ from the previous cycle, which is given by
\begin{equation}
	\label{eqn:krylov-update--remainder-generic-iteration-start}
	\vec r_{i-1} = \begin{bmatrix}
		V_{i-1} U_{i-1} & \widehat{Q}_i
	\end{bmatrix} r \left( \begin{bmatrix}
		U_{i-1}^H S_{i-1} U_{i-1}	& U_{i-1}^H \widetilde{\vec b}_{i-1} \vec e_1^H \\
		\vec e_1 \widetilde{\vec b}_{i-1}^H U_{i-1} & \widehat{T}_i
	\end{bmatrix} \right) \begin{bmatrix}
		U_{i-1}^H \vec v_{i-1} \\ 0
	\end{bmatrix}.
\end{equation}
Note that \cref{eqn:krylov-update--remainder-first-iteration} coincides with \cref{eqn:krylov-update--remainder-generic-iteration-start} with $i = 2$.

Expanding $\widehat{T}_i$ in terms of $T_i$, $\vec b_i$ and $\widehat{T}_{i+1}$, we have   
\begin{align*}
	\begin{bmatrix}
		U_{i-1}^H S_{i-1} U_{i-1} & U_{i-1}^H \widetilde{\vec b}_{i-1} \vec e_1^H \\
		\vec e_1 \widetilde{\vec b}_{i-1}^H U_{i-1} & \widehat{T}_i
	\end{bmatrix} &= \begin{bmatrix}
		U_{i-1}^H S_{i-1} U_{i-1} & U_{i-1}^H \widetilde{\vec b}_{i-1} \vec e_1^H  &  \\
		\vec e_1 \widetilde{\vec b}_{i-1}^H U_{i-1} & T_i & \vec b_i \vec e_1^H \\
		 & \vec e_1 \vec b_i^H & \widehat{T}_{i+1}
	\end{bmatrix}.
\end{align*}
Introducing the notation $V_i := \begin{bmatrix}
	V_{i-1} U_{i-1} & Q_i
\end{bmatrix} \in \C^{n \times (k+m)}$, $\vec v_i := \begin{bmatrix}
	U_{i-1}^H \vec v_{i-1} \\ 0
\end{bmatrix} \in \C^{k+m}$, $\widetilde{\vec b}_i := \begin{bmatrix}
	0 \\ 
	\vec b_i
\end{bmatrix} \in \C^{k+m}$ and
\begin{equation*}
	S_i = 
	\begin{bmatrix}
		U_{i-1}^H S_{i-1} U_{i-1} & U_{i-1}^H \widetilde{\vec b}_{i-1} \vec e_1^H \\
		\vec e_1 \widetilde{\vec b}_{i-1}^H U_{i-1} & T_i
	\end{bmatrix},
\end{equation*}
we can rewrite~\cref{eqn:krylov-update--remainder-generic-iteration-start} as
\begin{equation}
	\label{eqn:krylov-update--remainder-generic-iteration-rewritten}
	\vec r_{i-1} = \begin{bmatrix}
		V_i & \widehat{Q}_{i+1} 
	\end{bmatrix} r\left(\begin{bmatrix}
		S_i	& \widetilde{\vec b}_i \vec e_{1}^H \\
		\vec e_{1} \widetilde{\vec b}_i^H & \widehat{T}_{i+1}
	\end{bmatrix}\right) \begin{bmatrix}
		\vec v_i \\ 0
	\end{bmatrix}.
\end{equation}
The right-hand side of \cref{eqn:krylov-update--remainder-generic-iteration-rewritten} can be computed with the same strategy that was used in the first cycle to compute $\widehat{Q} r(\widehat{T}) \vec e_{1} \norm{\vec b}_2$.
Letting~$U_i \in \C^{(k+m) \times k}$ be an orthonormal basis of $\rat(S_i, \widetilde{\vec b}_i, \vec \xi_k)$, we can use \cref{cor:lowrank-update-matfun--eye} once again to obtain
\begin{equation*}
	r\left(\begin{bmatrix}
		S_i	& \widetilde{\vec b}_i \vec e_1^H \\
		\vec e_1 \widetilde{\vec b}_i^H & \widehat{T}_{i+1}
	\end{bmatrix}\right) \begin{bmatrix}
		\vec v_i \\ 0
	\end{bmatrix} = 
	\begin{bmatrix}
		\widetilde{\vec y}_i \\
		\vec 0
	\end{bmatrix} + 
	\begin{bmatrix}
		\widetilde{\vec w}_i \\
		\vec 0
	\end{bmatrix} + \widetilde{\vec r}_i,
\end{equation*}
where 
\begin{equation*}
	\widetilde{\vec y}_i = r(S_i) \vec v_i, \qquad \widetilde{\vec w}_i = - U_i r(U_i^H S_i U_i) U_i^H \vec v_i,
\end{equation*}
and
\begin{equation*}
	\widetilde{\vec r}_i = \begin{bmatrix}
		U_i & \\
		& I
	\end{bmatrix} r\left(\begin{bmatrix}
		U_i^H S_i U_i & U_i^H \widetilde{\vec b}_i \vec e_1^H \\
		\vec e_1 \widetilde{\vec b}_i^H U_i & \widehat{T}_{i+1}
	\end{bmatrix}\right)  \begin{bmatrix}
		U_i^H \vec v_i \\ 0
	\end{bmatrix}.
\end{equation*}
From this, we easily obtain
\begin{equation}
	\label{eqn:krylov-update--residual-update}
	\vec r_{i-1} = V_i \widetilde{\vec y}_i + V_{i} \widetilde{\vec w}_i + \begin{bmatrix}
		V_i & \widehat{Q}_{i+1}
	\end{bmatrix} \widetilde{\vec r}_i = V_i r(S_i) \vec v_i + \vec w_i + \vec r_i,
\end{equation}
where
\begin{equation*}
	\vec w_i = - V_i U_i r(U_i^H S_i U_i) U_i^H \vec v_i 
\end{equation*}
and
\begin{equation}
	\label{eqn:krylov-update--remainder-generic-iteration-end}
	\vec r_i = \begin{bmatrix}
		V_{i} & \widehat{Q}_{i+1}
	\end{bmatrix} \widetilde{\vec r}_i
	= \begin{bmatrix}
		V_i U_i  & \widehat{Q}_{i+1}
	\end{bmatrix}
	r\left(\begin{bmatrix}
		U_i^H S_i U_i & U_i^H \widetilde{\vec b}_i \vec e_1^H \\
		\vec e_1 \widetilde{\vec b}_i^H U_i & \widehat{T}_{i+1}
	\end{bmatrix}\right)  \begin{bmatrix}
		U_i^H \vec v_i \\ 0
	\end{bmatrix}.
\end{equation}
Note that if we replace $i$ with $i+1$ in \cref{eqn:krylov-update--remainder-generic-iteration-start} we obtain \cref{eqn:krylov-update--remainder-generic-iteration-end}, i.e., we have written the remainder $\vec r_i$ in a form that is ready for the $(i+1)$-th cycle.

The approximate solution to $r(A) \vec b$ is updated with the identity
\begin{equation}
	\label{eqn:generic-cycle-approximation}
	\vec y_i = \vec y_{i-1} + \vec w_{i-1} + V_i r(S_i) \vec v_i.
\end{equation}  
Recalling \cref{eqn:krylov-update--first-iteration-solution-identity}, in the first cycle we have $\widehat{\vec y} = \vec y_1 + \vec w_1 + \vec r_1$. It is easy to prove by induction that a similar identity holds in all subsequent cycles. Indeed, assuming that $\widehat{\vec y} = \vec y_{i-1} + \vec w_{i-1} + \vec r_{i-1}$, we have
\begin{equation*}
	\widehat{\vec y} = \vec y_{i-1} + \vec w_{i-1} + V_i r(S_i) \vec v_i + \vec w_i + \vec r_i = \vec y_i + \vec w_i + \vec r_i,
\end{equation*} 
where we have used \cref{eqn:krylov-update--residual-update} and \cref{eqn:generic-cycle-approximation}.
Note that, similarly to the first cycle, in \cref{eqn:krylov-update--remainder-generic-iteration-end} the matrix~$V_i$ only appears multiplied by $U_i$, so after computing~$\vec y_i$ and~$\vec w_i$ it is no longer necessary to store the whole basis $V_i$.

\subsection{Final cycle}
\label{subsec:algorithm--final-cycle}

The final cycle is slightly simpler, since we can compute the remainder directly instead of using the low-rank update formula.
Indeed, at the beginning of the $s$-th cycle the remainder~$\vec r_{s-1}$ can be computed directly from \cref{eqn:krylov-update--remainder-generic-iteration-end}, that reads
\begin{equation*}
	\vec r_{s-1} = \begin{bmatrix}
		V_{s-1} U_{s-1}  & Q_s
	\end{bmatrix}
	r\left(\begin{bmatrix}
		U_{s-1}^H S_{s-1} U_{s-1} & U_{s-1}^H \widetilde{\vec b}_{s-1} \vec e_1^H \\
		\vec e_1 \widetilde{\vec b}_{s-1}^H U_{s-1} & T_s
	\end{bmatrix}\right)  \begin{bmatrix}
		U_{s-1}^H \vec v_{s-1} \\ 0
	\end{bmatrix}.
\end{equation*}
Using the same notation introduced in previous cycles, we define $V_s := \begin{bmatrix}
	V_{s-1} U_{s-1} & Q_s
\end{bmatrix}$,
\begin{equation*}
	\vec v_{s} := \begin{bmatrix}
		U_{s-1}^H \vec v_{s-1} \\
		0
	\end{bmatrix} \qquad \text{and} \qquad
	S_s := \begin{bmatrix}
		U_{s-1}^H S_{s-1} U_{s-1} & U_{s-1}^H \widetilde{\vec b}_{s-1} \vec e_1^H \\
		\vec e_1 \widetilde{\vec b}_{s-1}^H U_{s-1} & T_s
	\end{bmatrix},
\end{equation*} 
so we have
\begin{equation*}
	\vec r_{s-1} = V_s r(S_s) \vec v_s,
\end{equation*} 
and the final approximation to $r(A) \vec b$ is obtained as
\begin{equation}
	\label{eqn:final-cycle-approximation}
	\vec y_s = \vec y_{s-1} + \vec w_{s-1} + V_s r(S_s) \vec v_s.
\end{equation}
Note that \cref{eqn:final-cycle-approximation} coincides with \cref{eqn:generic-cycle-approximation} where $i$ has been replaced by $s$, so in the final cycle of the algorithm we compute the same update as in the other cycles, even though the derivation is different.

Since in the $(s-1)$-th cycle we have $\widehat{\vec y} = \vec y_{s-1} + \vec w_{s-1} + \vec r_{s-1}$, it follows that $\vec y_s = \widehat{\vec y}$, i.e., in the last cycle the algorithm that we just described computes the same approximation to~$r(A) \vec b$ as~$M = k + sm$ iterations of the outer Krylov subspace method. We are going to show in \cref{prop:algorithm-coincides-with-lanczos} that the same approximations as Lanczos are computed also in the intermediate cycles.
The resulting algorithm is summarized in \cref{algorithm:lowmemory-rational-lanczos-matfun}.

\begin{algorithm}
	\begin{algorithmic}[1]
		\Require{$A \in \C^{n \times n}, \vec b \in \C^{n}$, $k, s, m \in \N$, $\vec \xi_k = [\xi_0, \dots, \xi_{k-1}], \vec \theta_M = [\theta_0, \dots, \theta_{M -1}]$ such that $M = k + sm$, $\theta_0 = \infty$ and $\theta_{k + im} = \infty$ for $i = 1, \dots, s-1$.}
		\Ensure{ $\vec y_s \approx f(A) \vec b$, such that $\vec y_s \in \rat(A, \vec b, \vec \theta_M)$.}
		\LineComment{First outer cycle:}
		\State Run $m+k+1$ iterations of the rational Lanczos algorithm with the outer poles $\vec \theta_M$ to compute $Q_1$, $T_1$ and~$\vec b_1$
		\State Compute the first approximation $\vec y_1 = Q_1 f(T_1) \vec e_1 \norm{\vec b}_2$ 
		\State Define $S_1 := T_1$, $V_1 := Q_1$, $\vec v_1 := \vec e_1 \norm{\vec b}_2$ and $\widetilde{\vec b}_1 := \vec b_1$    
		\LineComment{Other outer cycles:}
		\For{$i = 2, \dots, s$}
		\State Construct the basis $U_{i-1}$ of the rational Krylov subspace $\rat(S_{i-1}, \widetilde{\vec b}_{i-1}, \vec \xi_k)$ 
		\State Compute the correction $\vec w_{i-1} = - V_{i-1} U_{i-1} f(U_{i-1}^H S_{i-1} U_{i-1}) U_{i-1}^H \vec v_{i-1}$ 
		\parState{Run $m$ additional iterations of the rational Lanczos algorithm with the outer poles $\vec \theta_M$ to compute $Q_i$, $T_i$ and~$\widetilde{\vec b}_i$}
		\State Define $V_i := \begin{bmatrix}
			V_{i-1} U_{i-1} & Q_i
		\end{bmatrix}$, $\vec v_i = \begin{bmatrix}
			U_{i-1}^H \vec v_{i-1} \\
			0
		\end{bmatrix}$, $S_i = \begin{bmatrix}
			U_{i-1}^H S_{i-1} U_{i-1} & U_{i-1}^H \widetilde{\vec b}_{i-1} \vec e_1^H \\
			\vec e_1 \widetilde{\vec b}_{i-1}^H U_{i-1} & T_i \\		
		\end{bmatrix}$.
		\State Update the approximation $\vec y_i = \vec y_{i-1} + \vec w_{i-1} + V_i f(S_i) \vec v_i$ 
		\EndFor
	\end{algorithmic}
	
		\caption{RK-Compressed rational Lanczos for $f(A) \vec b$} \label{algorithm:lowmemory-rational-lanczos-matfun}
	
\end{algorithm}
	
\section{Analysis and comparison with existing algorithms}
\label{sec:algorithm-analysis-and-comparison}

In this section we analyze \cref{algorithm:lowmemory-rational-lanczos-matfun} from both a theoretical and computational point of view, and we compare it with other low-memory methods from the literature.

\subsection{Theoretical results}

We start by showing that the iterates computed by \cref{algorithm:lowmemory-rational-lanczos-matfun} coincide with iterates of the rational Lanczos method associated to the Krylov subspace $\rat(A, \vec b, \vec \theta_M)$ when $f$ is a rational function. 

\begin{proposition}
	\label{prop:algorithm-coincides-with-lanczos}
	When $f$ is a rational function of type $(k-1, k)$ with poles given by~$\vec \xi_k$, the approximations $\{\vec y_i\}_{i = 1}^s$ computed by \cref{algorithm:lowmemory-rational-lanczos-matfun} coincide with the approximations given by~\cref{eqn:krylov-matfun-approx} with an outer Krylov subspace of the appropriate dimension. Precisely, for any $i = 1, \dots, s$ we have
	\begin{equation*}
		\vec y_i = \mat Q_{k + im} f(\mat T_{k + im}) \vec e_1 \norm{\vec b}_2, \qquad \mat T_{k + im} = \mat Q_{k + im}^H A \mat Q_{k + im},
	\end{equation*}
	where $\mat Q_{k + im}$ is the orthonormal basis of $\rat(A, \vec b, \vec \theta_{k + im})$ generated by the rational Lanczos algorithm.
\end{proposition}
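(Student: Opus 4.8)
The plan is to reduce the general statement to the case $i=s$, which has essentially already been carried out in \cref{sec:algorithm}. Recall that the derivation there, performed for an arbitrary rational function $r$ of type $(k-1,k)$ with complex conjugate roots and poles, produces through the chain \cref{eqn:krylov-update--first-iteration-solution-identity}, \cref{eqn:krylov-update--residual-update}, \cref{eqn:generic-cycle-approximation} and \cref{eqn:final-cycle-approximation} the identity $\vec y_s = \widehat{\vec y} = \mat Q_M r(\mat T_M)\vec e_1\norm{\vec b}_2$. Since in the present setting $f=r$ and $\vec\xi_k$ is exactly the pole list used by the inner iteration, every invocation of \cref{cor:lowrank-update-matfun--eye} in that derivation is exact, and so the claim holds for $i=s$ with no approximation error.

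To treat a general $i\in\{1,\dots,s\}$, I would show that the quantities $V_j, S_j, \vec v_j, \widetilde{\vec b}_j, U_j, \vec w_j, \vec y_j$ produced in cycles $j=1,\dots,i$ by \cref{algorithm:lowmemory-rational-lanczos-matfun} are unchanged if the algorithm is instead called with $s$ replaced by $i$, that is, with input $(A,\vec b,k,i,m,\vec\xi_k,\vec\theta_{k+im})$ — an admissible input, since its hypotheses $\theta_0=\infty$ and $\theta_{k+jm}=\infty$ for $j=1,\dots,i-1$ form a subset of the standing ones. Granting this invariance, applying the already-proved $i=s$ case to the truncated run yields $\vec y_i = \mat Q_{k+im} f(\mat T_{k+im})\vec e_1\norm{\vec b}_2$, which is precisely the asserted equality.

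The only real obstacle is this invariance claim: one must be sure that nothing computed in the first $i$ cycles depends, even implicitly, on the total iteration count $s$. I would establish it by a bookkeeping induction on $j$. Both runs build their outer Krylov data by feeding $(A,\vec b)$ to the rational Lanczos algorithm (\cref{algorithm:short-recurrence}), and the only difference between them is the pole list, of which $\vec\theta_{k+im}$ is a common initial segment of $\vec\theta_M$. By the nestedness recalled in \cref{subsec:krylov-background} — the first $\ell$ columns of a rational Lanczos basis span $\rat(A,\vec b,[\theta_0,\dots,\theta_{\ell-1}])$, and the leading principal blocks of the projected matrix are given by \cref{eqn:proj-matrix} and \cref{eqn:projected-matrix-identity--infinite-pole} — the blocks $Q_1,\dots,Q_i$, $T_1,\dots,T_i$ and the coupling vectors $\vec b_1,\dots,\vec b_i$ coincide in the two runs. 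Then $V_1=Q_1$, $S_1=T_1$, $\vec v_1$ and $\widetilde{\vec b}_1=\vec b_1$ are literally identical, and for $2\le j\le i$ the defining formulas for $U_{j-1}$, $\vec w_{j-1}$, $V_j$, $\vec v_j$, $S_j$, $\widetilde{\vec b}_j$ and $\vec y_j$ involve only cycle-$(j-1)$ data together with $Q_j, T_j, \vec b_j$, all of which already agree by the inductive hypothesis. Once this is in place the proof is complete, since the core algebraic identity $\vec y_s=\widehat{\vec y}$ was already derived in \cref{sec:algorithm} for an arbitrary rational $r$ of type $(k-1,k)$.
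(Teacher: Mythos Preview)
Your proposal is correct and follows essentially the same approach as the paper: reduce to the already-established case $i=s$ by running the algorithm with $s$ replaced by $i$ and arguing that the first $i$ cycles produce identical quantities in both runs. The paper phrases the last point slightly differently---it notes that the truncated run's final cycle uses the direct residual computation of \cref{subsec:algorithm--final-cycle} rather than the low-rank update, and then observes that the two update formulas \cref{eqn:final-cycle-approximation} and \cref{eqn:generic-cycle-approximation} coincide---but this is the same content as your bookkeeping induction, just organized around the derivation rather than the pseudocode.
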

\begin{proof}
	We have already observed that $\vec y_1$ coincides with the approximation~\cref{eqn:krylov-matfun-approx} from the Krylov subspace $\rat(A, \vec b, \vec \theta_{k+m})$, and we have shown at the end of~\cref{subsec:algorithm--final-cycle} that $\vec y_s$ coincides with the approximation $\widehat{\vec y}$ computed by the outer Krylov method after $M = k + sm$ iterations. 
	To show that this also holds for all other cycles, let us fix $1 < i < s$ and suppose that we run a variant of \cref{algorithm:lowmemory-rational-lanczos-matfun} with $s$ replaced by $s' = i$ and $M$ replaced by $M' = k + im$. 
	It is easy to see that this variant of the algorithm performs exactly the same operations as the original one up to the $(i-1)$-th cycle, and hence computes the same approximate solutions~$\vec y_1, \dots, \vec y_{i-1}$. The only difference from the original algorithm is that the $i$-th iteration is carried out by directly computing the residual $\vec r_{i-1}$ as described in \cref{subsec:algorithm--final-cycle}, instead of performing the low-rank update with \cref{cor:lowrank-update-matfun--eye}. 
	However, as shown in \cref{subsec:algorithm--final-cycle} for the final iteration of the original algorithm, the update formula \cref{eqn:final-cycle-approximation} for the final approximation to $r(A) \vec b$ coincides with~\cref{eqn:generic-cycle-approximation}, so this algorithm variant also computes the same approximation $\vec y_i$ as the original one. 
	Since the final approximation computed by the variant of the algorithm coincides with the approximation generated by the outer Krylov subspace $\rat(A, \vec b, \vec \theta_{k + im})$ after $M' = k + im$ iterations, we conclude that this also holds for the approximation~$\vec y_i$ computed in the $i$-th cycle of the original algorithm. 
\end{proof}

When \cref{algorithm:lowmemory-rational-lanczos-matfun} is applied to a general function $f$, we have to use \cref{cor:lowrank-update-matfun--eye--general-f} instead of \cref{cor:lowrank-update-matfun--eye}, so in each cycle we introduce an error that depends on the approximation error of $f$ with rational functions. The following proposition provides a bound on the error of \cref{algorithm:lowmemory-rational-lanczos-matfun} with respect to the rational Lanczos algorithm in the general case.

\begin{proposition}
	\label{prop:lowmem-rational-lanczos-error-bound}
	Assume that \cref{algorithm:lowmemory-rational-lanczos-matfun} with $s$ cycles is applied to a function $f$ that is analytic on $\mathbb{W}(A)$, using inner poles $\vec \xi_k$. 
	The error of \cref{algorithm:lowmemory-rational-lanczos-matfun} with respect to the rational Lanczos algorithm is bounded by
	\begin{equation}
		\label{eqn:lowmem-rational-lanczos-error-bound}
		\norm{\widehat{\vec y} - \vec y_{s}}_2 \le 4(s-1)\norm{\vec b}_2 \min_{r \in \poly_{k-1}/q} \norm{f - r}_{\mathbb{W}(A)},
	\end{equation}
	where $q$ is a polynomial with roots given by the finite elements of $\vec \xi_k$.  
\end{proposition}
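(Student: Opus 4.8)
The plan is to track, cycle by cycle, the error introduced when Corollary~\ref{cor:lowrank-update-matfun--eye--general-f} is used in place of the exact rational identity Corollary~\ref{cor:lowrank-update-matfun--eye}, and then sum these contributions. For a rational function $r$ of type $(k-1,k)$ with poles $\vec\xi_k$, Proposition~\ref{prop:algorithm-coincides-with-lanczos} shows that the algorithm produces exactly the rational Lanczos iterate $\widehat{\vec y}$. For a general analytic $f$, the algorithm instead evaluates $f$ on the various small matrices $S_i$, on the compressed blocks $U_i^H S_i U_i$, and on the bordered matrices appearing in $\widetilde{\vec r}_i$. The key point is that $\widehat{\vec y}$ itself can be written using $f(\widehat T)$, and the exact chain of identities \eqref{eqn:krylov-update--first-iteration-solution-identity}--\eqref{eqn:generic-cycle-approximation} is valid \emph{as written} for any function, not just rational ones — what fails for non-rational $f$ is only the low-rank update step, i.e.\ the passage from $r(\widehat T_i)$ applied to a vector to the decomposition $\widetilde{\vec y}_i + \widetilde{\vec w}_i + \widetilde{\vec r}_i$ via Corollary~\ref{cor:lowrank-update-matfun--eye}. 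So I would first argue that replacing $r$ by $f$ everywhere, the quantities $\vec y_i, \vec w_i, \vec r_i$ still satisfy $\widehat{\vec y} = \vec y_i + \vec w_i + \vec r_i$ \emph{up to an accumulated error}, where each cycle contributes an extra term $E_i$ coming from Corollary~\ref{cor:lowrank-update-matfun--eye--general-f}.

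Concretely, in cycle $i$ (for $2 \le i \le s-1$) we apply Corollary~\ref{cor:lowrank-update-matfun--eye--general-f} to the bordered matrix
\[
	\begin{bmatrix} S_i & \widetilde{\vec b}_i \vec e_1^H \\ \vec e_1 \widetilde{\vec b}_i^H & \widehat T_{i+1} \end{bmatrix}
\]
with the block $A_{11}$ taken to be $S_i$ and the first block of size $k+m$. The corollary gives
\[
	f\!\left(\begin{bmatrix} S_i & \widetilde{\vec b}_i \vec e_1^H \\ \vec e_1 \widetilde{\vec b}_i^H & \widehat T_{i+1} \end{bmatrix}\right)
	= \begin{bmatrix} f(S_i) & \\ & f(\widehat T_{i+1}) \end{bmatrix}
	+ \begin{bmatrix} U_i & \\ & I \end{bmatrix} X_k(f) \begin{bmatrix} U_i^H & \\ & I \end{bmatrix} + E_k(f),
\]
with $\norm{E_k(f)}_F \le 4 \min_{r \in \poly_{k-1}/q} \norm{f - r}_{\mathbb{W}}$ where the supremum norm is over a set containing the numerical range of the bordered matrix. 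I would note that this numerical range is contained in $\mathbb{W}(\widehat T) \subseteq \mathbb{W}(A)$, since all the $S_i$ and $\widehat T_{i+1}$ arise as compressions of $\widehat T$ (via the orthonormal columns of $V_i$, $\widehat Q_i$, and products with $U_j$'s), and compression does not enlarge the numerical range; this makes the bound uniform across cycles, equal to $4 \min_{r} \norm{f-r}_{\mathbb{W}(A)}$. Multiplying the identity on the left by $[V_i \;\; \widehat Q_{i+1}]$ and on the right by $[\vec v_i;\, 0]$, the first two terms reproduce exactly $V_i f(S_i)\vec v_i + \vec w_i + \vec r_i$ as in \eqref{eqn:krylov-update--residual-update}, and the leftover is $[V_i \;\; \widehat Q_{i+1}] E_k(f) [\vec v_i; 0]$, whose $2$-norm is bounded by $\norm{E_k(f)}_F \, \norm{\vec v_i}_2$ because $[V_i \;\; \widehat Q_{i+1}]$ has orthonormal columns. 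Since $\vec v_i$ is obtained from $\vec v_1 = \vec e_1 \norm{\vec b}_2$ by repeated left-multiplication by the $U_{j-1}^H$ (padded with zeros), which are contractions, we get $\norm{\vec v_i}_2 \le \norm{\vec b}_2$ for every $i$.

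Assembling this, I would prove by induction on $i$ that $\widehat{\vec y} = \vec y_i + \vec w_i + \vec r_i + \vec\varepsilon_i$ with $\norm{\vec\varepsilon_i}_2 \le 4(i-1)\,\norm{\vec b}_2 \min_{r\in\poly_{k-1}/q}\norm{f-r}_{\mathbb{W}(A)}$: the base case $i=1$ is exact (Corollary~\ref{cor:lowrank-update-matfun--eye--general-f} is not yet used), and each subsequent cycle adds one error term of size at most $4\norm{\vec b}_2 \min_r \norm{f-r}_{\mathbb{W}(A)}$. In the final cycle the remainder $\vec r_{s-1}$ is computed directly without the low-rank update (Section~\ref{subsec:algorithm--final-cycle}), so no new error is incurred there, and with $\vec r_s$ and $\vec w_s$ no longer present we obtain $\widehat{\vec y} - \vec y_s = \vec\varepsilon_{s-1}$, giving the claimed bound with the factor $(s-1)$. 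The main obstacle I anticipate is the bookkeeping needed to verify that every matrix on which $f$ is evaluated really is a compression of $\widehat T$ (so that its numerical range sits inside $\mathbb{W}(A)$ and the per-cycle bound is genuinely uniform), and to confirm that the error terms propagate additively — i.e.\ that applying $f$ to later bordered matrices does not amplify the errors already committed. This should follow cleanly from the fact that all the transfer operators involved ($[V_i\;\;\widehat Q_{i+1}]$ and the $U_j$) are isometries or contractions, but it requires stating carefully that the exact recursion \eqref{eqn:krylov-update--residual-update}--\eqref{eqn:generic-cycle-approximation} is linear in the vector being propagated, so that a perturbation at cycle $i$ travels through the remaining cycles with its norm unchanged.
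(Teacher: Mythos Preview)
Your approach is essentially the paper's own: track the additive error introduced each time \cref{cor:lowrank-update-matfun--eye--general-f} replaces \cref{cor:lowrank-update-matfun--eye}, bound each term via $\mathbb{W}(\widetilde S_i)\subset\mathbb{W}(A)$, orthonormality of $[V_i\;\;\widehat Q_{i+1}]$, and $\norm{\vec v_i}_2\le\norm{\vec b}_2$, then sum. The structure and the ingredients all match.

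There is one bookkeeping slip. You write that ``the base case $i=1$ is exact (Corollary~\ref{cor:lowrank-update-matfun--eye--general-f} is not yet used)'' and that the low-rank update is applied only for $2\le i\le s-1$. That is not correct: the very first identity $\widehat{\vec y}=\vec y_1+\vec w_1+\vec r_1$ in \cref{eqn:krylov-update--first-iteration-solution-identity} is itself obtained by applying \cref{cor:lowrank-update-matfun--eye} to $\widehat T$, so for general $f$ it already carries an error term~$\vec\epsilon_1$. With your stated accounting (exact at $i=1$, one new error in each of cycles $2,\dots,s-1$, none in the final cycle) you would accumulate only $s-2$ terms and obtain the factor $4(s-2)$, contradicting your own conclusion of $4(s-1)$. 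The correct count is that \cref{cor:lowrank-update-matfun--eye--general-f} is invoked once for each compression basis $U_1,\dots,U_{s-1}$, hence $s-1$ error terms $\vec\epsilon_1,\dots,\vec\epsilon_{s-1}$, and the paper's identity is
\[
\widehat{\vec y}=\vec y_i+\vec w_i+\vec r_i+\sum_{\ell=1}^{i}\vec\epsilon_\ell,\qquad i=1,\dots,s-1,
\]
followed by the exact final step $\vec y_s=\vec y_{s-1}+\vec w_{s-1}+\vec r_{s-1}$. Your concern about error amplification is unnecessary: the $\vec r_i$ are \emph{defined} by their formulas (with $f$), and each $\vec\epsilon_i$ measures the defect in a single exact-versus-approximate identity, so the contributions are purely additive and do not pass through any later evaluation of $f$.
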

\begin{proof}
	For convenience, we introduce the notation $\widetilde{S}_i = \begin{bmatrix}
		S_i & \widetilde{\vec b}_i \vec e_1^H \\
		\vec e_1 \widetilde{ \vec b}_i^H & \widehat{T}_{i+1}
	\end{bmatrix}$. 
	When we replace \cref{cor:lowrank-update-matfun--eye} with \cref{cor:lowrank-update-matfun--eye--general-f} in the $i$-th cycle of \cref{algorithm:lowmemory-rational-lanczos-matfun}, we introduce an additional error term $E_i(f)$ in the expression for $f(\widetilde{S}_i)$, that by \cref{eqn:lowrank-update-matfun--general-f-error} satisfies
	\begin{equation*}
		\norm{E_i(f)}_2 \le 4 \min_{r \in \poly_{k-1}/q} \norm{f - r}_{\mathbb{W}(\widetilde{S}_i)}.
	\end{equation*}
	Note that for all $i$, we have~$\mathbb{W}(\widetilde{S}_i) \subset \mathbb{W}(A)$.  
	The expression for $\vec r_{i-1}$ with the error term included becomes
	\begin{equation*}
		\vec r_{i-1} = V_i f(S_i) \vec v_i + \vec w_i + \vec r_i + \vec \epsilon_i,
	\end{equation*} 
	where
	\begin{equation*}
		\vec \epsilon_i = \begin{bmatrix}
			V_i & \widehat{Q}_{i+1}
		\end{bmatrix} E_i(f) \begin{bmatrix}
			\vec v_i \\
			0
		\end{bmatrix}.
	\end{equation*}
	Since $V_i$ and $\widetilde{Q}_{i+1}$ have orthonormal columns and $\norm{\vec v_i}_2 \le \norm{\vec b}_2$, we have
	\begin{equation*}
		\norm{\vec \epsilon_i}_2 \le \norm{E_i(f)}_2 \norm{\vec b}_2 \le 4 \norm{\vec b}_2 \min_{r \in \poly_{k-1}/q} \norm{f - r}_{\mathbb{W}(A)}.
	\end{equation*}    
	It is easy to see that, using the update formula \cref{eqn:generic-cycle-approximation} for $\vec y_i$ as in the rational function case, the approximate solution computed in the $i$-th cycle satisfies the identity
	\begin{equation*}
		\widehat{\vec y} = \vec y_{i} + \vec w_{i} + \vec r_{i} + \sum_{\ell = 1}^{i} \vec \epsilon_\ell.
	\end{equation*}  
	Since in the final cycle the remainder $\vec r_{s - 1}$ is computed directly, the total error of \cref{algorithm:lowmemory-rational-lanczos-matfun} with respect to the rational Lanczos algorithm is thus
	\begin{displaymath}
		\norm{\widehat{\vec y} - \vec y_{s}}_2 \le \sum_{\ell = 1}^{s-1} \vec \epsilon_i \le 4(s-1) \norm{\vec b}_2 \min_{r \in \poly_{k-1}/q} \norm{f - r}_{\mathbb{W}(A)}.
	\end{displaymath}
\end{proof}
\cref{prop:lowmem-rational-lanczos-error-bound} shows that if we take $\vec \xi_k$ as the poles of a highly accurate rational approximant of $f$, the iterates $\vec y_i$ generated by \cref{algorithm:lowmemory-rational-lanczos-matfun} essentially coincide with the corresponding approximation computed by the rational Lanczos algorithm, since the error in \cref{eqn:lowmem-rational-lanczos-error-bound} is going to be negligible compared to the error $\norm{f(A) \vec b - \widehat{\vec y}}_2$ of the outer Krylov subspace method.

\subsection{Computational discussion}

In its $i$-th cycle, \cref{algorithm:lowmemory-rational-lanczos-matfun} has to keep in memory the last two columns of $Q_{i-1}$ required to run the rational Lanczos algorithm for the computation of~$Q_i$, the compressed basis $V_i \in \C^{n \times (k+m)}$ and the projected matrix $S_i \in \C^{(k+m) \times (k+m)}$, as well as the vectors $\vec y_{i-1}$, $\vec w_{i-1}$, $\vec v_i$ and $\widetilde{\vec b}_i$.

Since $\theta_{k+(i-1)m} = \infty$, the projected matrix $T_i$ can be computed by exploiting the block structure of the matrices $\underline{\mat H}_{k + im}$ and $\underline{\mat K}_{k + im}$, as shown in \cref{subsec:krylov-background}. 
In particular, all the entries of $T_i$ except for the one in position $(1, 1)$ can be computed during the iterations of the rational Lanczos algorithm in which~$Q_i$ is constructed. 
The top-left entry of $T_i$ needs to be corrected according to~\cref{eqn:projected-matrix-identity--infinite-pole}, which requires us to compute~$\vec e_{k+(i-1)m} \mat K_{k+(i-1)m}^{-1} \vec e_{k+(i-1)m}$. 
Since we also have $\theta_{k+(i-2)m} = \infty$, the resulting block structure implies that we can compute $\mat K_{k+(i-1)m}^{-1} \vec e_{k+(i-1)m}$ by considering only the trailing principal $m \times m$ block of $\mat K_{k+(i-1)m}$, which is computed in the $(i-1)$-th cycle. 
As a consequence, the correction for the $(1, 1)$ entry of $T_i$ in~\cref{eqn:projected-matrix-identity--infinite-pole} can be precomputed during the $(i-1)$-th cycle, without the need to store the whole matrices $\underline{\mat H}_{k + im}$ and $\underline{\mat K}_{k + im}$. We omit the technical details.

It is important to note that \cref{algorithm:lowmemory-rational-lanczos-matfun} only computes functions of matrices of size~$k \times k$ and $(k+m) \times (k+m)$, that are independent on the number of cycles $s$, so the cost of computing functions of projected matrices does not increase with the iteration number, in contrast with the rational Lanczos method. This can lead to significant computational savings when the number of iterations is very large.

In each cycle, \cref{algorithm:lowmemory-rational-lanczos-matfun} also has to construct a $k$-dimensional rational Krylov subspace associated with a $(k+m) \times (k+m)$ matrix. Since $k$ and $m$ are typically much smaller than~$n$, the cost of operations with  matrices and vectors of size $m+k$ is usually negligible with respect to the cost of operations with vectors of size $n$, so we expect that the construction of the inner rational Krylov subspace will not have a significant impact on the overall performance of the method.

In addition to the parameter $k$, which determines the accuracy of the inner rational approximation, and hence the highest accuracy attainable by the method, we also have to choose the parameter $m$, which determines the number of vectors that are added to the basis between two consecutive compressions. This parameter should be chosen according to the available memory. Taking $m$ small decreases memory requirements but increases the frequency of computations with $(k+m)\times(k+m)$ matrices. 

Note that in the pseudocode of \cref{algorithm:lowmemory-rational-lanczos-matfun}, our method only computes the approximate solutions $\vec y_i$ once every $m$ iterations. 
However, the algorithm can be easily adapted to compute an approximate solution in every outer iteration that employs an infinite pole, using the same update formula as in line 9 of \cref{algorithm:lowmemory-rational-lanczos-matfun} but with a smaller matrix $S_i$. 
Note that the only difference with respect to the definition of $S_i$ in line 8 is in the $T_i$ block and in the size of the off-diagonal blocks, so the only additional operation required to compute an approximate solution is the computation of a matrix function of size at most $(k+m) \times (k+m)$. 
It is easy to see that an approximate solution computed in this way also coincides with the one constructed in the corresponding iteration of the outer Krylov method, via the same argument used in the proof of \cref{prop:algorithm-coincides-with-lanczos}. 
This technique is employed in our implementation of \cref{algorithm:lowmemory-rational-lanczos-matfun}.

Instead of running the method for a fixed number of cycles, in practice it is desirable to run it until we obtain a solution with a certain accuracy. A commonly used stopping criterion that can be employed for this purpose is the norm of the difference of two consecutive approximations. This simple criterion offers no guarantee on the final error and it may underestimate it in practice, especially when convergence is slow, but we found it to be accurate enough for our purposes. 
Observe that in order to check the convergence condition it is not necessary to form the approximate solutions~$\vec y_i$ of length $n$, but it is enough to compute differences of short vectors. Indeed, we have
\begin{align*}
	\vec y_i - \vec y_{i-1} &= \vec w_{i-1} + V_i f(S_i) \vec v_i \\
	&= \begin{bmatrix}
		V_{i-1} U_{i-1} & Q_i
	\end{bmatrix} \left(\begin{bmatrix}
		f(U_{i-1}^H S_{i-1} U_{i-1}) U_{i-1}^H \vec v_{i-1} \\
		0
	\end{bmatrix}+ f(S_i) \vec v_i \right),
\end{align*} 
and since $V_i$ has orthonormal columns we conclude that
\begin{equation*}
	\norm{\vec y_i - \vec y_{i-1}}_2 = \norm*{\begin{bmatrix}
		f(U_{i-1}^H S_{i-1} U_{i-1}) U_{i-1}^H \vec v_{i-1} \\
		0
	\end{bmatrix}+ f(S_i) \vec v_i }_2.
\end{equation*} 
This formulation allows us to check if the stopping criterion is satisfied without performing operations with vectors of length $n$, which results in some computational savings. Our implementation of \cref{algorithm:lowmemory-rational-lanczos-matfun} employs this expression in its stopping criterion.

\subsection{Comparison with other low-memory Krylov methods}

In this section we briefly compare our method with other low-memory Krylov subspace methods from the literature, highlighting advantages and disadvantages of each method.
Similarly to multishift CG \cite{frommer99fast}, \cref{algorithm:lowmemory-rational-lanczos-matfun} is based on a rational approximant, so the attainable accuracy is ultimately limited by the available memory, since the number of vectors that must be stored is	proportional to the number of poles used in the rational approximation. 
However, while multishift CG requires an explicit rational approximant in partial fraction form, our method only needs the poles of the rational function. This makes \cref{algorithm:lowmemory-rational-lanczos-matfun} easier to use, and less susceptible to numerical errors caused by the representation of the rational function.

Given the similarities between \cref{algorithm:lowmemory-rational-lanczos-matfun} and multishift CG, it is useful to compare the cost of the two methods in more detail. 
For this purpose, we employ the simple computational model used in \cite[Experiment~5.3]{guettel21comparison}, in which operations on vectors of length $n$ such as scaling or addition are counted as one unit of work, denoted by $1 \mathcal{V}$, and operations with vectors or matrices of size independent of $n$ are not counted. 
As discussed in \cite{guettel21comparison}, when the underlying rational approximant has $k$ poles the multishift CG algorithm must store $2 k$ vectors of length $n$ and perform approximately $5 k \mathcal{V}$ operations in addition to the standard Lanczos algorithm. 
\cref{algorithm:lowmemory-rational-lanczos-matfun} with $k$ inner poles and compression every $m$ outer iterations must store $k+m$ vectors of length $n$, and the only operation that involves vectors of length $n$ outside of the standard Lanczos algorithm is in the compression step (line 8 in \cref{algorithm:lowmemory-rational-lanczos-matfun}), where the product $V_{i-1} U_{i-1}$ has to be computed for a cost of~$2k(k+m) \mathcal{V}$ every $m$ iterations, that on average amounts to~$2k m^{-1} (k+m) \mathcal{V}$ at each iteration. 
If we take~$m = k$, so that \cref{algorithm:lowmemory-rational-lanczos-matfun} and multishift CG have the same memory requirements and the same attainable accuracy, the cost of \cref{algorithm:iter-short-rec-arnoldi} is $4 k \mathcal{V}$ per iteration, which is smaller than the $5 k \mathcal{V}$ cost of multishift CG. 
However, note that the efficiency of multishift CG can be improved by removing converged linear systems, i.e., by no longer updating the approximate solutions of linear systems when the residual becomes smaller than the requested tolerance \cite[Section~5.3]{vandeneshof02numerical}. 
This can significantly reduce the cost of the method, since linear systems associated to different poles often have substantially different convergence rates.

In contrast with \cref{algorithm:lowmemory-rational-lanczos-matfun} and multishift CG, the memory requirements of the two-pass Lanczos method \cite{borici00fast} are independent of the target accuracy, with the exception of the projected matrix, which grows in size at each iteration. 
When many iterations are needed to reach convergence, the computation of functions of projected matrices of increasing size can have a significant impact on the performance of the method, although this can be mitigated by only computing the approximate solution once every~$d > 1$ iterations. For the same number of Lanczos iterations, the two-pass version requires twice the number of matrix-vector products with $A$ compared to the other methods. 
However, in practice the cost is usually less than doubled, because in the second pass it is only necessary to recompute the Krylov basis vectors, and the orthogonalization coefficients have already been computed in the first pass.

The restarted Krylov method for Stieltjes matrix functions \cite{frommer14efficient,frommer14convergence} requires storing a number of vectors proportional to the restart length. Although the amount of memory available does not influence the accuracy attainable by this method, a shorter restart length can cause delays in the convergence, similarly to what happens when restarting Krylov subspace methods for the solution of linear systems. The convergence delay can be mitigated by employing deflation techniques~\cite{eiermann11deflated}.
We note that this restarted method explicitly requires the expression of the integrand function in the Stieltjes representation of $f$. The restarted method can also be applied to a function that is not Stieltjes by using a different integral representation, such as one based on the Cauchy integral formula. This was done in \cite[Section~4.3]{frommer14efficient} for the exponential function.

\section{Numerical experiments}
\label{sec:numerical-experiments}

In this section we compare \cref{algorithm:lowmemory-rational-lanczos-matfun}, which we denote by {\tt \nostro}, with other Krylov subspace methods for the computation of the action of a matrix function on a vector. 
For all methods, we monitor the relative norm of the difference between two consecutive computed solutions and stop when it becomes smaller than a requested tolerance. This difference is monitored at every iteration in which an infinite pole is used.

The MATLAB code to reproduce the experiments in this section is available on Github at \url{https://github.com/casulli/ratkrylov-compress-matfun}. We use our own implementation of \cref{algorithm:lowmemory-rational-lanczos-matfun}, two-pass Lanczos and multishift CG, while for the restarted Arnoldi for Stieltjes matrix functions we use the {\tt funm\_quad} implementation~\cite{schweitzer14funmquad,frommer14efficient}. Although our implementation of \cref{algorithm:lowmemory-rational-lanczos-matfun} has no external dependencies, to compute the rational approximant in partial fraction form employed in the multishift CG algorithm we use the implementation of the AAA algorithm \cite{nakatsukasa18aaa} from the chebfun package \cite{driscoll14chebfun}. All the experiments were performed with MATLAB R2021b on a laptop running Ubuntu 20.04, with 32 GB of RAM and an Intel Core i5-10300H CPU with clock rate 2.5 GHz, using a single thread.

\subsection{Exponential function}
\label{subsec:experiments--exponential}

As a first test problem, we consider the computation of~$e^{-tA}\vec  1$, where the matrix $A\in \C^{n^2\times n^2}$ is the discretization of the 2D Laplace operator with zero Dirichlet boundary conditions using centered finite differences with $n+2$ points in each direction, that is
\begin{equation}
	\label{eqn:2d-laplacian-definition}
	A= B\otimes I + I \otimes B, \quad \text{where} \quad B=(n+1)^{2}\left[\begin{smallmatrix}
		2 & -1\\
		-1 & \ddots & \ddots\\
		& \ddots & \ddots & -1\\
		&& -1 & 2
	\end{smallmatrix}\right]\in \C^{n\times n},
\end{equation}
and $\vec 1\in \C^{n^2}$ is the vector of all ones. We compare the accuracy and timing of different methods based on Krylov subspaces using as a reference solution  ${ e^{-tB}}\vec 1 \otimes { e^{-tB}}\vec 1$, where the involved exponentials are computed using the MATLAB command {\tt expm}. We set $n = 10^3$ (therefore the size of $A$ is $10^6 \times 10^6$) and choose different values of $t$. As $t$ increases, the eigenvalues of the matrix $-t A$ of which we compute the exponential become more spread out, so we expect an increasing number of iterations for the convergence of polynomial Krylov subspace methods (see, e.g., \cite[Section~4]{beckermann2009error}).

We use \cref{algorithm:lowmemory-rational-lanczos-matfun} with all outer poles equal to $\infty$ and $k = 25$ inner poles of the form described in \cite{carpenter1984extended}, which guarantee a rational approximation of $e^x$ for $x \in (-\infty,0]$ with an absolute error of the order of machine precision. In particular, assuming that the involved matrix has no positive eigenvalues, the inner poles are independent of the spectrum of the matrix.
We compare our {\tt \nostro} method with the standard Lanczos algorithm ({\tt lanczos}), the two-pass version of Lanczos ({\tt lanczos-2p}) and the Arnoldi algorithm with full orthogonalization ({\tt arnoldi}). Note that both {\tt lanczos} and {\tt arnoldi} require storing the whole Krylov basis.

In Figure~\ref{fig:exponential-time}, we report the time needed to compute $e^{-tA}\vec 1$ with the different methods for different values of $t$, using a relative tolerance of~$10^{-10}$ in the stopping criterion. 
For any fixed value of $t$, all the employed methods converge in the same number of iterations, which is reported in \cref{table:exponential-iter} along with the final relative error attained. All the methods stop at the same iteration and produce the same approximate solution, confirming that \cref{algorithm:lowmemory-rational-lanczos-matfun} is reproducing the convergence of the Lanczos algorithm (up to the error in the approximation of $e^x$ with a rational function, which is negligible in this case). As the required number of iterations increases, our {\tt \nostro} method appears to be the fastest. This is mainly due to the fact that in \cref{algorithm:lowmemory-rational-lanczos-matfun} the size of the matrix functions computed during the execution of the algorithm does not increase with the size of the Krylov subspace, in contrast with the other methods. 

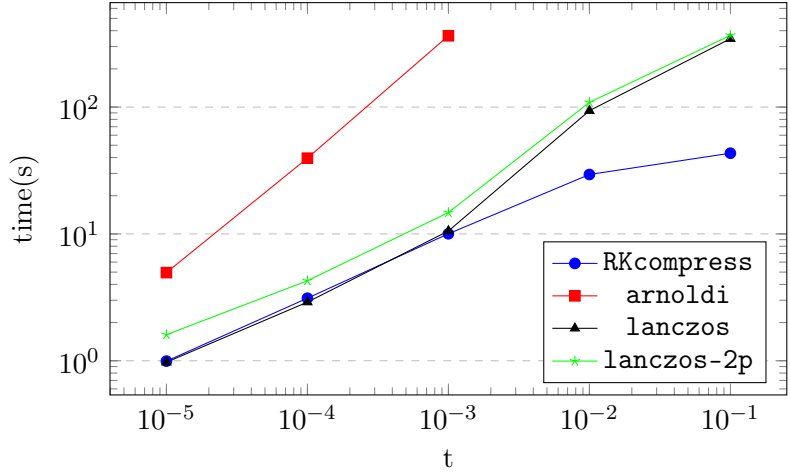
\begin{figure}
	\makebox[\linewidth][c]{
		\begin{tikzpicture}
			\begin{loglogaxis}[
				title = {},  
				xlabel = {t},
				ylabel = {time(s)},
				x tick label style={/pgf/number format/.cd,%
					scaled x ticks = false,
					set thousands separator={},
					fixed},
				legend pos=south east,
				ymajorgrids=true,
                height=.45\textwidth,
				grid style=dashed]
		
				\addplot[color=blue, mark=*, mark size=2pt] table{exp_low-mem.dat};
				\addlegendentry{{\tt{\nostro}}}

				\addplot[ color=red, mark=square*, mark size=2pt] table{exp_Arnoldi-full.dat};
				\addlegendentry{{\tt{arnoldi}}}

                \addplot[color=black, mark=triangle*, mark size=2pt] table {exp_Lanczos.dat};
				\addlegendentry{{\tt{lanczos}}}

                \addplot[,color=green, mark=star, mark size=2pt] table{exp_Lanczos-twoPass.dat};
				\addlegendentry{{\tt{lanczos-2p}}}
			\end{loglogaxis}
	\end{tikzpicture}}
\caption{Time needed for the computation of $e^{-tA}\vec 1$ with accuracy of $10^{-10}$, for different values of $t$ and employing different methods.}\label{fig:exponential-time}
\end{figure}

\begin{table}
    \centering
	\caption{Number of iterations and final relative error for all the methods in Figure~\ref{fig:exponential-time}, using relative tolerance $10^{-10}$.}
	\label{table:exponential-iter}
	\begin{tabular}{c|rrrrrr}
		\toprule
		$t$ & &$10^{-5}$ & $10^{-4}$ & $10^{-3}$ &$10^{-2}$ & $10^{-1}$\\
		\midrule
		iter & & 39 & 119 & 372 & 1104 & 1650 \\
		err & & 3.98e-11 & 1.89e-10 & 6.54e-10 & 2.26e-09 & 3.01e-09 \\
		\bottomrule
	\end{tabular}
\end{table}

\subsection{Markov functions}
\label{subsec:experiments--markov-functions}

An important class of functions for which rational approximations have been already described in the literature \cite{beckermann2009error} is given by Markov functions, which can be represented as 
\begin{equation}\label{eqn:Markov_fun}
    f(z)=\int_{\alpha}^{\beta} \frac{d\mu(x)}{z-x},
\end{equation}
where $\mu(x)$ is a positive measure and $-\infty \le \alpha < \beta < \infty.$ This class contains several frequently used functions, such as
\begin{equation*}
    \frac{\log(1 + z)}{z}=-\int_{-\infty}^{-1}\frac{1/x}{z-x}dx \quad \mathrm { and } \quad z^\gamma=\frac{\sin(\pi\gamma)}{\pi}\int_{-\infty}^{0}\frac{|x|^\gamma}{z-x}dx,
\end{equation*}
with $-1<\gamma<0$.
If $f$ is defined as in \eqref{eqn:Markov_fun} and $A$ is a Hermitian matrix with eigenvalues greater than~$\beta$, we can use the approach in \cite{beckermann2009error} to choose quasi-optimal inner poles for {\tt \nostro}. In particular, to obtain a rational approximation of $f$ on an interval $[a,b]$ with $a > \beta$ with relative error norm bounded by $\epsilon$ it is sufficient to use $k$ poles where
\begin{equation}
	\label{eqn:bound-markov}
	k\ge \log\left(\frac{4}{\epsilon}\right)\frac{\log(16(b-\beta)/(a-\beta))}{\pi^2},
\end{equation}
therefore the number of poles needed in {\tt \nostro} depends logarithmically on the condition number of $A-\beta I$. We refer to \cite[Section~6.2]{beckermann2009error} for more details regarding the choice of poles.

In the experiment that follows, we compute $A^{-1/2} \vec 1$, where $A \in \C^{n^2 \times n^2}$ is a discretization of the 2D Laplace operator \cref{eqn:2d-laplacian-definition} with increasing $n = 200, 400, \dots, 1000$, comparing several low-memory Krylov subspace methods, using a relative tolerance of $10^{-8}$. The reference solution is computed by diagonalizing $A$, exploiting the Kronecker sum structure.  
We compare \cref{algorithm:lowmemory-rational-lanczos-matfun} ({\tt \nostro}) with the two-pass Lanczos method ({\tt lanczos-2p}), the standard multishift CG method ({\tt msCG}), a more efficient implementation of multishift CG with removal of converged linear systems ({\tt msCG-rem}), and the restarted Krylov method for Stieltjes functions, both with and without deflation, denoted by {\tt restart-defl} and {\tt restart}, respectively.

For \cref{algorithm:lowmemory-rational-lanczos-matfun}, we use outer poles all equal to $\infty$ and inner poles from \cite{beckermann2009error} with $k$ given by~\cref{eqn:bound-markov} and $m = k$. For a tolerance of $10^{-8}$, the values of $k$ range from $k = 26$ to $k = 32$ for the different matrix sizes. The rational approximant for multishift CG is obtained by running the AAA algorithm~\cite{nakatsukasa18aaa} with tolerance~$10^{-12}$ on a discretization of the spectral interval of $A$; this produces a rational approximant with either $18$ or $19$ poles for all matrix sizes, that are fewer than the ones obtained using \cref{eqn:bound-markov}, but it does not provide any theoretical guarantee on the approximation error. For the restarted Krylov method for Stieltjes functions, we set a restart length equal to $2k$ (so that the memory requirement is the same as {\tt \nostro}) and we use the default options in {\tt funm\_quad}; we retain $5$ Ritz vectors in the variant with deflation.

\begin{figure}[htbp]
	\makebox[\linewidth][c]{
		\begin{tikzpicture}
			\begin{semilogyaxis}[
				title = {},  
				xlabel = {$n$},
				ylabel = {time (s)},
				xtick={200, 400, 600, 800, 1000}, 
				x tick label style={/pgf/number format/.cd,%
					scaled x ticks = false,
					set thousands separator={},
					fixed},
				legend pos=south east,
				ymajorgrids=true,
                height=.45\textwidth,
				grid style=dashed]
		
				\addplot[color=blue, mark=*, mark size=2.5pt] table{invsqrt_low-memory.dat};
				\addlegendentry{{\tt{\nostro}}}

				\addplot[ color=red, mark=square*, mark size=2.5pt] table{invsqrt_lanczos-twopass.dat};
				\addlegendentry{{\tt{lanczos-2p}}}

                \addplot[color=black, mark=triangle*, mark size=2.5pt] table {invsqrt_multishiftCG.dat};
				\addlegendentry{{\tt{msCG}}}

                \addplot[color=brown, mark=star, mark size=2.5pt] table{invsqrt_multishiftCG-rem.dat};
				\addlegendentry{{\tt{msCG-rem}}}

				\addplot[color=violet, mark=diamond*, mark size=2.5pt] table{invsqrt_restarted.dat};
				\addlegendentry{{\tt{restart}}}

				\addplot[color=teal, mark=x, mark size=2.5pt] table{invsqrt_restarted-defl.dat};
				\addlegendentry{{\tt{restart-defl}}}

			\end{semilogyaxis}
	\end{tikzpicture}}
\caption{Time needed for the computation of $A^{-1/2}\vec 1$ with relative tolerance $10^{-8}$, where $A \in \C^{n^2 \times n^2}$ is a discretization of the 2D Laplace operator for increasing $n$, employing different low-memory methods.}
\label{fig:invsqrt-time-1}
\end{figure}
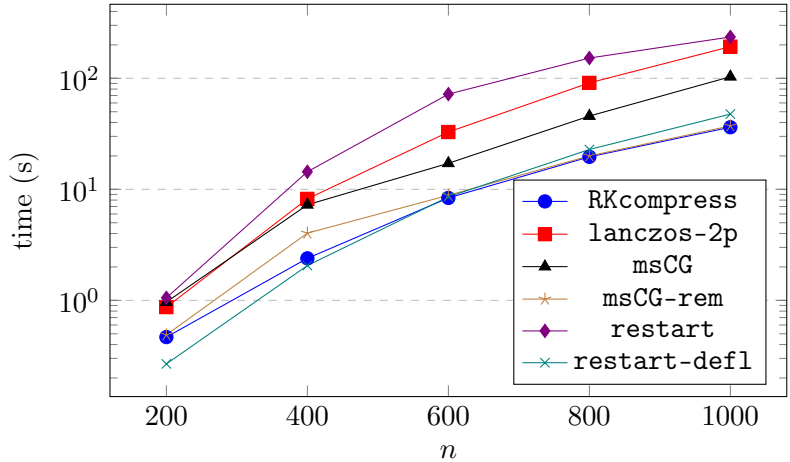

\begin{table}[htbp]
    \centering
	\caption{Final relative errors for the experiment in Figure~\ref{fig:invsqrt-time-1}.}
	\label{table:invsqrt-err-1}
	\begin{tabular}{r|cccccc}
		\toprule
		$\text{size}(A)$   &{\tt \nostro} & {\tt lanczos-2p} & {\tt msCG} & {\tt msCG-rem} & {\tt restart} & {\tt restart-defl} \\
		\midrule
		40000 & 9.01e-08 & 9.01e-08 & 7.81e-09 & 7.82e-09 & 2.93e-10 & 1.68e-11 \\
		160000 & 1.29e-07 & 1.29e-07 & 6.50e-09 & 6.50e-09 & 2.44e-08 & 1.58e-09 \\ 
		360000 & 1.70e-07 & 1.70e-07 & 2.68e-05 & 2.68e-05 & 5.29e-08 & 2.19e-09 \\
		640000 & 2.47e-07 & 2.47e-07 & 3.85e-06 & 3.85e-06 & 1.83e-05 & 4.34e-09 \\
		1000000 & 3.86e-07 & 3.86e-07 & 1.32e-06 & 1.32e-06 & 3.78e-04 & 2.52e-09 \\
		\bottomrule
	\end{tabular}
\end{table}

\begin{table}[htbp]
    \centering
	\caption{Number of iterations for the experiment in Figure~\ref{fig:invsqrt-time-1}.}
	\label{table:invsqrt-iter-1}
	\begin{tabular}{r|SSSSS[table-format=5.0]S}
		\toprule
		$\text{size}(A)$ &{\tt \nostro} & {\tt lanczos-2p} & {\tt msCG} & {\tt msCG-rem} & {\tt restart} & {\tt restart-defl} \\
		\midrule
		40000 & 282 & 282 & 379 & 379 & 2496 & 468 \\
		160000 & 554 & 554 & 747 & 747 & 8232 & 896 \\
		360000 & 823 & 823 & 1122 & 1122 & 16380 & 1440 \\
		640000 & 1085 & 1085 & 1497 & 1497 & 18600 & 2108 \\
		1000000 & 1336 & 1336 & 1872 & 1872 & 19200 & 2880 \\
		\bottomrule
	\end{tabular}
\end{table}

We report the times in \cref{fig:invsqrt-time-1}, the final errors in \cref{table:invsqrt-err-1} and the number of iterations in \cref{table:invsqrt-iter-1}. 
The methods with the shortest runtime are {\tt \nostro}, {\tt msCG-rem} and {\tt restart-defl}. Observe that {\tt restarted-defl} requires significantly more iterations compared to the methods without restarting, so it would perform worse in a situation in which matrix-vector products with $A$ are more expensive.  
Observe that for $n \ge 600$ the final error of multishift CG is significantly larger than the requested tolerance, suggesting that the rational approximant given by AAA is not accurate enough. However, it turns out that running {\tt \nostro} with inner poles given by the AAA approximant has roughly the same error as with the poles from \cite{beckermann2009error}, implying that the error in multishift CG should be mainly attributed to the explicit partial fraction representation of the rational function.

We note that the final error of {\tt restart-defl} is smaller compared to the errors of {\tt \nostro} and {\tt lanczos-2p}, since in the {\tt funm\_quad} implementation the approximate solutions are computed and compared only at the end of each restart cycle, hence the stopping criterion is more reliable and less likely to underestimate the error.
The restarted Krylov method without deflation was unable to converge to the requested accuracy within $300$ restart cycles for~$n \ge 800$.

\subsection{Rational outer Krylov method}
\label{subsec:experiments--shift-and-invert}
In this experiment we examine the effectiveness of \cref{algorithm:lowmemory-rational-lanczos-matfun} with a rational outer Krylov method. We let $A \in \C^{n \times n}$ be a discretization of the one-dimensional Laplace operator, and we approximate $A^{-1/2} \vec 1$ with stopping tolerance~$10^{-6}$, using as reference solution the approximation computed with a rational Krylov method with tolerance~$10^{-8}$. 
We use a shift-and-invert outer Krylov method with the optimal single pole given in \cite[Section~6.1]{beckermann2009error}. For the inner poles in \cref{algorithm:lowmemory-rational-lanczos-matfun}, we use the quasi-optimal poles from \cite{beckermann2009error} with~$k$ given by \cref{eqn:bound-markov} and we set $m = 10$. We compare {\tt \nostro} with the two-pass rational Lanczos algorithm ({\tt rat-lanczos-2p}) that uses the same poles (i.e., the shift-and-invert poles interleaved with an infinite pole at the iterations in which {\tt \nostro} performs a compression), so that the two methods have the same convergence rate, and they both check the convergence condition every $10$ iterations. The times and final errors for increasing values of $n$ are shown in \cref{table:shift-and-invert}. While both methods converge in the same number of iterations for all matrix sizes, {\tt \nostro} is faster than {\tt rat-lanczos-2p} since it needs to solve half the number of shifted linear systems with the matrix $A$ and it computes functions of smaller projected matrices.

\begin{table}[htbp]
    \centering
	\caption{Time, final error and number of iterations for the experiment in \cref{subsec:experiments--shift-and-invert}.}
	\label{table:shift-and-invert}
	\begin{tabular}{c|cc|cc|c}
		\toprule
		\multirow{2}{*}{$\text{size}(A)$} &\multicolumn{2}{c|}{time (s)} & \multicolumn{2}{c|}{error} & \multirow{2}{*}{iter} \\
		& {\tt \nostro} & {\tt rat-lanczos-2p} & {\tt \nostro} & {\tt rat-lanczos-2p}  \\
		\midrule
		\phantom{1}50000 & 0.43 & 0.47 & 3.38e-08 & 7.22e-09 & 177 \\
		100000 & 1.03 & 1.31 & 1.67e-07 & 7.28e-08 & 239 \\
		150000 & 1.89 & 2.53 & 1.52e-07 & 2.82e-07 & 290 \\
		200000 & 3.49 & 3.97 & 2.55e-07 & 1.05e-07 & 331 \\
		\bottomrule
	\end{tabular}
\end{table}

\subsection{Numerical loss of orthogonality}
\label{subsec:experiments--loss-of-orthogonality}

In order to investigate the behavior of \cref{algorithm:lowmemory-rational-lanczos-matfun} in finite precision arithmetic, we compute $e^A \vec b$, where $A \in \C^{2000 \times 2000}$ is a tridiagonal matrix with logspaced eigenvalues in the interval $[-10^{4}, -10^{-4}]$, and $\vec b$ is a random vector with unit normal entries.
The convergence of {\tt \nostro}, {\tt lanczos} and {\tt arnoldi} are compared in \cref{fig:lossorth}. Both {\tt lanczos} and {\tt \nostro} exhibit a delay in convergence due to the loss of orthogonality in the Krylov basis and they are essentially indistinguishable, so it appears that \cref{algorithm:lowmemory-rational-lanczos-matfun} also reproducess the finite precision behavior of the Lanczos algorithm.

\begin{figure}[htbp]
	\makebox[\linewidth][c]{
		\begin{tikzpicture}
			\begin{semilogyaxis}[
				title = {},  
				xlabel = {iteration},
				ylabel = {relative error},
				xtick={100, 200, 300, 400}, 
				x tick label style={/pgf/number format/.cd,%
					scaled x ticks = false,
					set thousands separator={},
					fixed},
				legend pos=south west,
				ymajorgrids=true,
                height=.35\textwidth,
				grid style=dashed]
		
				\addplot[very thick, dashed, color=blue, mark size=2.5pt] table{lossorth_low-mem.dat};
				\addlegendentry{{\tt{\nostro}}}

				\addplot[very thick, dashdotted, color=black, mark size=2.5pt] table{lossorth_lanczos.dat};
				\addlegendentry{{\tt{lanczos}}}

                \addplot[very thick, color=red, mark size=2.5pt] table {lossorth_arnoldi.dat};
				\addlegendentry{{\tt{arnoldi}}}

			\end{semilogyaxis}
	\end{tikzpicture}}
\caption{Effect of numerical loss of orthogonality in the computation of~$e^A \vec b$, where $A \in \C^{2000 \times 2000}$ has logspaced eigenvalues in $[-10^{4}, -10^{-4}]$ and~$\vec b$ is a random vector.}
\label{fig:lossorth}
\end{figure}
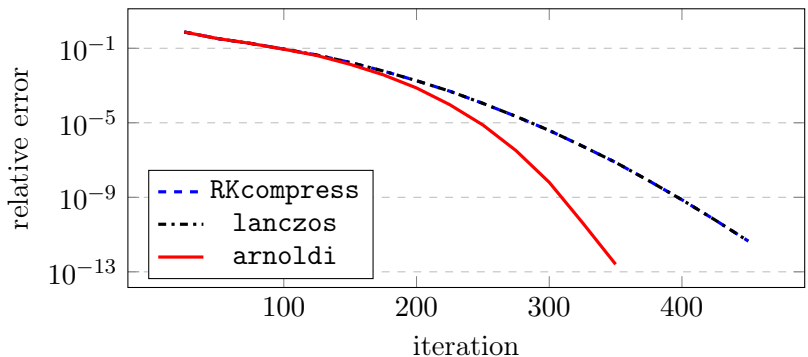

\section{Conclusions}
\label{sec:conclusions}

We have presented a memory-efficient method for the computation of $f(A) \vec b$ for a Hermitian matrix $A$. 
The method combines an outer rational Lanczos method with inner rational Krylov iterations, that are used to compress the Lanczos basis and reduce memory requirements. 
The construction of the inner rational Krylov basis only involves operations with small projected matrices and it does not require any operation with the matrix $A$. 
We have proved that our algorithm coincides with the outer Krylov method when $f$ is a rational function. In the general case, the error depends on a rational approximation of~$f$. 
Our numerical experiments show that the proposed algorithm is competitive with other low-memory methods based on polynomial Krylov subspaces. The algorithm can also be used effectively to reduce memory requirements of the shift-and-invert Lanczos method when it takes several iterations to converge.
The possibility of extending the proposed approach to non-Hermitian matrices remains an open problem.

\section*{Acknowledgements}
The authors are members of the research group INdAM-GNCS.

\bibliographystyle{siam}
\bibliography{manuscript-biblio}

\begin{thebibliography}{10}

\bibitem{aceto19rational}
{\sc L.~Aceto, D.~Bertaccini, F.~Durastante, and P.~Novati}, {\em Rational
  {K}rylov methods for functions of matrices with applications to fractional
  partial differential equations}, J. Comput. Phys., 396 (2019), pp.~470--482.

\bibitem{BCKS21}
{\sc B.~Beckermann, A.~Cortinovis, D.~Kressner, and M.~Schweitzer}, {\em
  Low-rank updates of matrix functions {II}: rational {K}rylov methods}, SIAM
  J. Numer. Anal., 59 (2021), pp.~1325--1347.

\bibitem{beckermann2009error}
{\sc B.~Beckermann and L.~Reichel}, {\em Error estimates and evaluation of
  matrix functions via the {F}aber transform}, SIAM J. Numer. Anal., 47 (2009),
  pp.~3849--3883.

\bibitem{benzi22rational}
{\sc M.~Benzi and I.~Simunec}, {\em Rational {K}rylov methods for fractional
  diffusion problems on graphs}, BIT, 62 (2022), pp.~357--385.

\bibitem{borici00fast}
{\sc A.~Bori{\c{c}}i}, {\em Fast methods for computing the {N}euberger
  operator}, in Numerical Challenges in Lattice Quantum Chromodynamics,
  A.~Frommer, T.~Lippert, B.~Medeke, and K.~Schilling, eds., Berlin,
  Heidelberg, 2000, Springer Berlin Heidelberg, pp.~40--47.

\bibitem{carpenter1984extended}
{\sc A.~Carpenter, A.~Ruttan, and R.~Varga}, {\em Extended numerical
  computations on the “1/9” conjecture in rational approximation theory},
  in Rational Approximation and Interpolation: Proceedings of the United
  Kingdom-United States Conference held at Tampa, Florida, December 12--16,
  1983, Springer, 1984, pp.~383--411.

\bibitem{casulli2024computing}
{\sc A.~A. Casulli, D.~Kressner, and L.~Robol}, {\em Computing functions of
  symmetric hierarchically semiseparable matrices}, 2024.

\bibitem{casulli2023computation}
{\sc A.~A. Casulli and I.~Simunec}, {\em Computation of generalized matrix
  functions with rational {K}rylov methods}, Math. Comp., 92 (2023),
  pp.~749--777.

\bibitem{chen23lowmemory}
{\sc T.~Chen, A.~Greenbaum, C.~Musco, and C.~Musco}, {\em Low-memory {K}rylov
  subspace methods for optimal rational matrix function approximation}, SIAM J.
  Matrix Anal. Appl., 44 (2023), pp.~670--692.

\bibitem{driscoll14chebfun}
{\sc T.~A. Driscoll, N.~Hale, and L.~N. Trefethen}, {\em Chebfun Guide},
  Pafnuty Publications, 2014.

\bibitem{druskin98extended}
{\sc V.~Druskin and L.~Knizhnerman}, {\em Extended {K}rylov subspaces:
  approximation of the matrix square root and related functions}, SIAM J.
  Matrix Anal. Appl., 19 (1998), pp.~755--771.

\bibitem{eiermann06restarted}
{\sc M.~Eiermann and O.~G. Ernst}, {\em A restarted {K}rylov subspace method
  for the evaluation of matrix functions}, SIAM J. Numer. Anal., 44 (2006),
  pp.~2481--2504.

\bibitem{eiermann11deflated}
{\sc M.~Eiermann, O.~G. Ernst, and S.~G\"{u}ttel}, {\em Deflated restarting for
  matrix functions}, SIAM J. Matrix Anal. Appl., 32 (2011), pp.~621--641.

\bibitem{frommer14convergence}
{\sc A.~Frommer, S.~G\"{u}ttel, and M.~Schweitzer}, {\em Convergence of
  restarted {K}rylov subspace methods for {S}tieltjes functions of matrices},
  SIAM J. Matrix Anal. Appl., 35 (2014), pp.~1602--1624.

\bibitem{frommer14efficient}
\leavevmode\vrule height 2pt depth -1.6pt width 23pt, {\em Efficient and stable
  {A}rnoldi restarts for matrix functions based on quadrature}, SIAM J. Matrix
  Anal. Appl., 35 (2014), pp.~661--683.

\bibitem{frommer99fast}
{\sc A.~Frommer and P.~Maass}, {\em Fast {CG}-based methods for
  {T}ikhonov-{P}hillips regularization}, SIAM J. Sci. Comput., 20 (1999),
  pp.~1831--1850.

\bibitem{frommer08matrix}
{\sc A.~Frommer and V.~Simoncini}, {\em Matrix functions}, in Model order
  reduction: theory, research aspects and applications, vol.~13 of Math. Ind.,
  Springer, Berlin, 2008, pp.~275--303.

\bibitem{golub2013matrix}
{\sc G.~H. Golub and C.~F. Van~Loan}, {\em Matrix {C}omputations}, Johns
  Hopkins Studies in the Mathematical Sciences, Johns Hopkins University Press,
  Baltimore, MD, fourth~ed., 2013.

\bibitem{guettel2010rational}
{\sc S.~G{\"u}ttel}, {\em Rational {K}rylov methods for operator functions},
  PhD thesis, Technische Universit{\"a}t Bergakademie Freiberg, 2010.

\bibitem{guettel13rational}
{\sc S.~G\"{u}ttel}, {\em Rational {K}rylov approximation of matrix functions:
  numerical methods and optimal pole selection}, GAMM-Mitt., 36 (2013),
  pp.~8--31.

\bibitem{guettel20limited}
{\sc S.~G\"{u}ttel, D.~Kressner, and K.~Lund}, {\em Limited-memory polynomial
  methods for large-scale matrix functions}, GAMM-Mitt., 43 (2020),
  pp.~e202000019, 19.

\bibitem{guettel21comparison}
{\sc S.~G\"{u}ttel and M.~Schweitzer}, {\em A comparison of limited-memory
  {K}rylov methods for {S}tieltjes functions of {H}ermitian matrices}, SIAM J.
  Matrix Anal. Appl., 42 (2021), pp.~83--107.

\bibitem{Higham08functions}
{\sc N.~J. Higham}, {\em Functions of {M}atrices: {T}heory and {C}omputation},
  Society for Industrial and Applied Mathematics (SIAM), Philadelphia, PA,
  2008.

\bibitem{hochbruck97krylov}
{\sc M.~Hochbruck and C.~Lubich}, {\em On {K}rylov subspace approximations to
  the matrix exponential operator}, SIAM J. Numer. Anal., 34 (1997),
  pp.~1911--1925.

\bibitem{lanczos50iteration}
{\sc C.~Lanczos}, {\em An iteration method for the solution of the eigenvalue
  problem of linear differential and integral operators}, J. Research Nat. Bur.
  Standards, 45 (1950), pp.~255--282.

\bibitem{moret04rd}
{\sc I.~Moret and P.~Novati}, {\em R{D}-rational approximations of the matrix
  exponential}, BIT, 44 (2004), pp.~595--615.

\bibitem{nakatsukasa18aaa}
{\sc Y.~Nakatsukasa, O.~S\`ete, and L.~N. Trefethen}, {\em The {AAA} algorithm
  for rational approximation}, SIAM J. Sci. Comput., 40 (2018),
  pp.~A1494--A1522.

\bibitem{palitta2022short}
{\sc D.~Palitta, S.~Pozza, and V.~Simoncini}, {\em The short-term rational
  {L}anczos method and applications}, SIAM J. Sci. Comput., 44 (2022),
  pp.~A2843--A2870.

\bibitem{ruhe1984rational}
{\sc A.~Ruhe}, {\em Rational {K}rylov sequence methods for eigenvalue
  computation}, Linear Algebra and its Applications, 58 (1984), pp.~391--405.

\bibitem{saad92analysis}
{\sc Y.~Saad}, {\em Analysis of some {K}rylov subspace approximations to the
  matrix exponential operator}, SIAM J. Numer. Anal., 29 (1992), pp.~209--228.

\bibitem{schweitzer14funmquad}
{\sc M.~Schweitzer, S.~Güttel, and A.~Frommer}, {\em {FUNM\_QUAD}: An
  implementation of a stable, quadrature based restarted {A}rnoldi method for
  matrix functions}, Technical Report, IMACM Preprint 14/04, 2014.

\bibitem{vandeneshof02numerical}
{\sc J.~{van den Eshof}, A.~Frommer, T.~Lippert, K.~Schilling, and H.~{van der
  Vorst}}, {\em Numerical methods for the {QCDd} overlap operator. {I}.
  {S}ign-function and error bounds}, Computer Physics Communications, 146
  (2002), pp.~203--224.

\end{thebibliography}

\end{document}